\documentclass[12pt]{amsart}
\usepackage{amssymb,verbatim,enumerate,ifthen}
\usepackage[mathscr]{eucal}
\usepackage[utf8]{inputenc}
\usepackage[T1]{fontenc}
\oddsidemargin -.5cm
\evensidemargin -0.54cm
\topmargin -1.cm
\textwidth 17cm
\textheight 22.8cm
\headheight 1.5cm

\makeatletter
\@namedef{subjclassname@2010}{%
  \textup{2010} Mathematics Subject Classification}
\makeatother

%% If you are using letters of the Polish alphabet, add \usepackage[T1]{fontenc}
%% E.g. the name "Zoladz" is then coded \.Zo{\l}\k{a}d\'z

%% Numbered objects of "theorem" style (text italicized).
%% The optional parameters indicate that all objects are numbered together, and "by section"

\newtheorem{thm}{Theorem}
\newtheorem{cor}{Corollary}
\newtheorem{prop}{Proposition}
\newtheorem{lem}{Lemma}

%% A numbered theorem with a fancy name:

%% Numbered objects of "non-theorem" style (text roman):

\newtheorem{rem}{Remark}

%% An unnumbered remark:

\newtheorem*{xrem}{Remark}

%% eqnarrays numbered by section:

\numberwithin{equation}{section}

%[english]{babel}
%%%%%%%%%%% For IMPAN journals:
\newcommand\nolabel[1]{\nonumber}

\newcommand\A[1]{P_{#1}}
\newcommand\R{\mathbb{R}}

\newcommand\N{\mathbb{N}}

\newcommand{\sr}[1]{A^{[#1]}}

\newcommand{\srpot}[1]{\mathcal{P}_{#1}}
\newcommand{\norma}[1]{\left\| #1 \right\| }
\newcommand{\abs}[1]{\left| #1 \right| }

\newcommand{\Sm}{\mathcal{S}}
\newcommand{\srE}[1]{\mathcal{E}_{#1}}

\DeclareMathOperator{\dom}{dom}

%Komentowanie linijki poniżej włšcza lub wyłšcza komentarz
%\newcommand\note[1]{}
%<----------------------------------------->

\newcommand{\constantCzero}{1.24886}
\newcommand{\constantyzero}{0.00007314}
\newcommand{\constantyone}{0.001608}

\subjclass[2010]{26E60, 26D15, 26D07} 
\keywords{quasi-arithmetic means, metric, Arrow-Pratt index, lower boundaries, risk aversion, distance among means}
%%%%%%%%%%%%%

\title{Lower estimation of the difference among quasi-arithmetic means}

\date{\today}
%\date {Oct 22, 2015 [12:08:26]}

\begin{document}

%%%%% To ease editing, for IMPAN journals add:

\baselineskip=17pt

%%%%%%%%%%%

%% In the running head, replace first names by initials 
%% and give an abbreviation of the title.

\numberwithin{equation}{section}
\def\eq#1{{\rm(\ref{#1})}}
\def\Eq#1#2{\ifthenelse{\equal{#1}{*}}
  {\begin{equation*}\begin{aligned}[]#2\end{aligned}\end{equation*}}
  {\begin{equation}\begin{aligned}[]\label{#1}#2\end{aligned}\end{equation}}}

\author[P. Pasteczka]{Pawe\l{} Pasteczka}
\address{Institute of Mathematics \\ Pedagogical University of Cracow \\ Podchor\k{a}\.zych str. 2, 30-084 Krak\'ow, Poland}
\email{ppasteczka@up.krakow.edu.pl}

\maketitle

\begin{abstract}
Quasi-arithmetic means are defined for every continuous, strictly monotone function $f \colon U \rightarrow \R$, ($U$ -- an interval). For an $n$-tuple $a \in U^n$ with corresponding vector of weights $w=(w_1,\dots,w_n)$ ($w_i>0$, $\sum w_i=1$) it equals $f^{-1}\left( \sum_{i=1}^{n} w_i f(a_i)\right)$.

In 1960s Cargo and Shisha defined a metric in a family of quasi-arithmetic means defined on a common interval as the maximal possible difference between these means taken over all admissible vectors with corresponding weights. 

During the years 2013--16 we proved that, having two quasi-arithmetic means, we can majorized distance between them in terms of Arrow-Pratt index $f''/f'$.
In this paper we are going to proof that this operator can be also used to establish certain lower boundaries of this distance.
\end{abstract}

\section{Introduction}
One of the most popular family of means encountered in literature is the family of 
\textit{quasi-arithmetic means}. These means are defined for any continuous, strictly 
monotone function $f \colon U \rightarrow \mathbb{R}$, $U$ -- an interval. 
When $a = (a_1,\dots,a_n)$ is an arbitrary sequence of points in $U$
and $w=(w_1,\dots,w_n)$ is a sequence of corresponding \textit{weights}
($w_i>0$, $\sum w_i=1$), then the mean $\sr{f}(a,w)$ is defined by the equality
\Eq{*}{
\sr{f}(a,w) := f^{-1}\left( \sum_{i=1}^{n} w_i f(a_i)\right).
}
This family of means first only glimpsed in the pioneering paper by Knopp ~\cite{Kno28}; shortly after it was formally introduced in a series of nearly simultaneous papers 
\cite{Def31,Kol30,Nag30} at the beginning of the 1930s. In fact quasi-arithmetic means has been considering as a generalization of well-known family of Power Means ever since their first appearing in \cite{Kno28}. Indeed, upon putting $U=(0,+\infty)$, $p_s(x)= x^s $ for $s \ne 0$, completed by $p_0(x)=\ln x$ we could easily identify power means as a subfamily of quasi-arithmetic means. Let us specify one more classical family of quasi-arithmetic means that will be used in this paper -- let $\{e_s \colon \R \to \R\}_{s \in \R}$ will be given by $e_s(x) = \exp(s \cdot x)$ for $s \ne 0$, completed by $e_0(x):=x$.
Denote by $\srE{s}$ the quasi-arithmetic mean generated by $e_s$, that is $\srE{s}:=\sr{e_s}$  for every $s \in \R$. In the other words for every vector $a=(a_1,\dots,a_n)$ of reals and corresponding weights $w$ we get
\Eq{*}{
\srE{s}(a,w)=\sr{e_s}(a,w)=
\begin{cases}
\tfrac1s \ln \left( \sum_{i=1}^n w_i e^{s\cdot a_i} \right) & \text{, if } s \ne 0, \\
             \sum_{i=1}^n w_ia_i  & \text{, if } s = 0.
 \end{cases}
}
These means are sometimes called $\log\text{-}\exp$ means (cf. \cite[p. 269]{Bul03}).
Family $(\srE{s})_{s \in \R}$ is closely related with the family of Power Means (denoted here as $\srpot{s})$. 
For $n \in \N$, $a \in \R^n$, and corresponding weights $w$ there holds
\Eq{*}{
\srpot{s}\big((e^{a_1},e^{a_2},\dots,e^{a_n}),w\big)=\exp(\srE{s}(a,w)) \text{ for every } s \in \R.
}

In fact this relation is even closer -- $(\srpot{s})_{s \in \R}$ and $(\srE{s})_{s \in \R}$ are two examples of so-called \emph{invariant scales} (cf. \cite{Pas15c} for details).

\vskip2mm

Coming back to a [general] quasi-arithmetic means. This means have been develop ever since their definition. In particular, in 1960s Cargo and Shisha \cite{CarShi69} introduced a metric among them. Namely, if $f$ and $g$ are both continuous, strictly monotone and have the same domain, then one can define a distance 
\Eq{*}{
\rho(\sr{f},\sr{g}):=\sup\{\abs{\sr{f}(a,w)-\sr{g}(a,w)} \colon a\textrm{ and }w \textrm{ admissible}\}.
}
Original definition were established in a different way, nevertheless this wording is equivalent.
Main goal of the present paper is to establish some {\it lower bounds} of the distance $\rho$. In \cite{CarShi69}, there was presented a number of majorizations of $\rho$. In particular few of them concerns lower boundaries
\begin{prop}[\cite{CarShi69}, Lemma~3.1, Theorem~4.2]
\label{prop:CS69}
Let $f,g \colon [0,1] \rightarrow [0,1]$ be two strictly increasing functions satisfying $f(0)=g(0)=0$, and $f(1)=g(1)=1$. Then
\Eq{*}{
\rho(\sr{f},\sr{g}) \ge \norma{f^{-1}-g^{-1}}_{\infty}.
}
Moreover, if $f^{-1}$ has a bounded derivative on $[0,1]$ then 
\Eq{*}{
\rho(\sr{f},\sr{g})\le 2\norma{\left(f^{-1}\right)'}_{\infty} \cdot \norma{f-g}_{\infty}.
}
\end{prop}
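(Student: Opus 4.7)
The plan is to treat the two inequalities separately; both rely only on the normalization $f(0)=g(0)=0$, $f(1)=g(1)=1$ together with the mean value theorem.

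For the lower estimate, I would exploit the smallest possible test configuration. Fix $t\in(0,1)$, take $n=2$, $a=(0,1)$, and weights $w=(1-t,t)$. Then
\Eq{*}{
\sr{f}(a,w)=f^{-1}\bigl((1-t)f(0)+tf(1)\bigr)=f^{-1}(t),
}
and analogously $\sr{g}(a,w)=g^{-1}(t)$. Hence $\rho(\sr{f},\sr{g})\ge \abs{f^{-1}(t)-g^{-1}(t)}$ for every $t\in(0,1)$. Because $f^{-1}-g^{-1}$ is continuous on $[0,1]$ and vanishes at both endpoints (by the normalization), the supremum over $(0,1)$ coincides with $\norma{f^{-1}-g^{-1}}_{\infty}$, which finishes this half.

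For the upper estimate, set $X:=\sum_{i=1}^n w_if(a_i)$ and $Y:=\sum_{i=1}^n w_ig(a_i)$; both lie in $[0,1]$, so that $f^{-1}$ and $g^{-1}$ can be applied to each. I would split
\Eq{*}{
\sr{f}(a,w)-\sr{g}(a,w)=\bigl(f^{-1}(X)-f^{-1}(Y)\bigr)+\bigl(f^{-1}(Y)-g^{-1}(Y)\bigr)
}
and bound the two brackets independently. The first is immediate from the mean value theorem applied to $f^{-1}$ combined with convexity of the weighted sum:
\Eq{*}{
\abs{f^{-1}(X)-f^{-1}(Y)}\le \norma{(f^{-1})'}_{\infty}\cdot \abs{X-Y}\le \norma{(f^{-1})'}_{\infty}\cdot \norma{f-g}_{\infty}.
}
For the second bracket I would substitute $z:=g^{-1}(Y)\in[0,1]$, rewriting the difference as $f^{-1}(g(z))-f^{-1}(f(z))$, and applying the mean value theorem once more to $f^{-1}$ on the interval with endpoints $f(z)$ and $g(z)$. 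That yields $\abs{f^{-1}(Y)-g^{-1}(Y)}\le \norma{(f^{-1})'}_{\infty}\cdot \norma{f-g}_{\infty}$, and adding the two bounds produces the factor $2$.

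The argument is elementary, so I do not expect a genuine obstacle. The one subtlety worth flagging concerns the lower bound: since weights are required to be strictly positive, one cannot plug in $t\in\{0,1\}$ directly. It is precisely the boundary normalization that repairs this gap, by forcing $f^{-1}-g^{-1}$ to vanish at both endpoints so that the supremum over the open interval agrees with the supremum norm. The upper estimate, by contrast, does not use the normalization at all; the value $2$ comes purely from the two successive applications of the mean value theorem.
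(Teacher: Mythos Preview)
Your argument is correct on both halves. The paper, however, does not supply its own proof of this proposition: it is quoted verbatim from Cargo and Shisha \cite{CarShi69} (their Lemma~3.1 and Theorem~4.2) and used only as background, so there is no in-paper proof to compare against. That said, the approach you give --- testing the pair $a=(0,1)$ with weight $t$ for the lower bound, and the splitting $f^{-1}(X)-g^{-1}(Y)=(f^{-1}(X)-f^{-1}(Y))+(f^{-1}(Y)-g^{-1}(Y))$ followed by two applications of the Lipschitz bound for $f^{-1}$ --- is exactly the standard one and matches what Cargo and Shisha do in the cited source. Your remark about the endpoint issue in the lower bound is accurate and is the only place the normalization is genuinely needed.
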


\begin{xrem}
 The assumption about the domain could be omitted by using the scaling of functions $f$ and $g$.
\end{xrem}

In our setting we are going to establish lower boundaries in terms of the Arrow-Pratt index (defined both in the abstract and below).

By \cite[Theorem~6.1]{CarShi69} the maximal value of the distance between quasi-arithmetic means is obtained for a vector of length two with some weights (it obviously does not imply that it cannot be obtained for any other vector of entries). More precisely in the definition above we can assume without loss of generality that the vector $a$ has length two. Thus it is reasonable to restrict our consideration to $2$-entry vectors only. In this setting it will be handy to denote 
\Eq{*}{
\sr{f}_\theta(z,x):=f^{-1}(\theta f(z)+(1-\theta)f(x)).
}
Therefore, using the notation introduced just an instant ago, we obtain the equality
\Eq{*}{
\rho(\sr{f},\sr{g})=\sup_{x,\,z\in U \atop \theta \in (0,1)}\abs{\sr{f}_\theta(z,x)-\sr{g}_\theta(z,x)}.
}

This equality boils down the problem of finding the distance between two quasi-arithmetic means to find the maximum value of some function in the space $U^2 \times (0,1)$.

Notice that the problem of finding a lower and upper estimates of the difference is much different. To establish an upper boundary we are looking for the inequality, which is valid for all entries (and weights) -- this is a very common method whenever $\sup$ operator appear.
Contrary to this, when it comes to finding a lower boundary, we need to prove that there exists a vector (with corresponding weights) such that the distance between two quasi-arithmetic means evaluated on it can be bounded from below. Additionally, it natural to look for an estimations which are resistant under affine changing of the functions (cf. Remark~\ref{rem:eq_qa} below).

First possible solution comes from Mikusi\'nski \cite{Mik48} and, independently, \L{}ojasiewicz (cf. \cite[footnote 2]{Mik48}). For a twice differentiable function $f \colon U \rightarrow \R$ ($U$ - an interval) having nowhere vanishing first derivative (hereafter we will denote such a family of functions by $\Sm(U)$ ) they defined an operator $\A{f}:=f''/f'$. In the mathematical economy, the {\it negative\,} of this operator happened to be called the {\it Arrow-Pratt measure of risk aversion}, in dynamical systems it is called {\it nonlinearity} of the function.
Besides, Mikusi\'nski and \L{}ojasiewicz proved that comparability of quasi-arithmetic means might be easily expressed in terms of operator $\A{}$.

\begin{prop}[Basic comparison]\label{prop:basiccompare}
Let $U$ be an interval, $f,\,g \in \Sm(U)$. 
Then the following conditions are equivalent: 
%%%%%%%%%%
\begin{enumerate}[\upshape (i)]
%%%%
\item $\A f > \A g$ on a dense set in $U$\,,
%%%%
\item $(\mathrm{sgn} f') \cdot (f \circ g^{-1})$ is strictly convex\,,
%%%%
\item $\sr{f}(a,w) \ge \sr{g}(a,w)$ for all vectors $a \in U^n$ and weights 
$w$, with both sides equal only when $a$ is 
a constant vector.
\end{enumerate}
\end{prop}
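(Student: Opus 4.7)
The plan is to first normalize signs. Since $\sr{f}=\sr{-f}$ and $\A{f}=\A{-f}$, replacing $f$ or $g$ by its negative when needed reduces us to the case $f',g'>0$ on $U$; in (ii) the factor $\sign f'$ is then $+1$ and disappears, so (ii) simplifies to: $f\circ g^{-1}$ is strictly convex on $g(U)$.

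I would prove (i) $\Leftrightarrow$ (ii) by direct calculation. Writing $h:=f\circ g^{-1}$ and $u:=g^{-1}(y)$, a routine differentiation gives
\Eq{*}{
h''(y)=\frac{f'(u)}{g'(u)^{2}}\bigl(\A{f}(u)-\A{g}(u)\bigr).
}
Because $f',g'>0$, the sign of $h''(y)$ matches that of $\A{f}(u)-\A{g}(u)$ pointwise. Since $h$ is of class $C^{2}$, strict convexity of $h$ is equivalent to $h''\ge 0$ throughout $g(U)$ together with the zero set of $h''$ having empty interior (otherwise $h$ would be affine on a subinterval, destroying strict convexity). Transported by the identity above and using continuity of $\A{f}-\A{g}$, this matches condition (i) exactly.

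For (ii) $\Rightarrow$ (iii), I would apply Jensen's inequality to the strictly convex function $h$ at $y_{i}:=g(a_{i})$ with weights $w_{i}$:
\Eq{*}{
h\Bigl(\sum_{i=1}^{n} w_{i}g(a_{i})\Bigr)\le \sum_{i=1}^{n} w_{i}h(g(a_{i}))=\sum_{i=1}^{n} w_{i}f(a_{i}),
}
with strict inequality unless all $g(a_{i})$ coincide, i.e.\ (by injectivity of $g$) unless $a$ is constant. Unwinding $h=f\circ g^{-1}$ and applying the increasing function $f^{-1}$ to both sides yields $\sr{g}(a,w)\le\sr{f}(a,w)$ with the asserted strictness. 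Conversely, for (iii) $\Rightarrow$ (ii), I would argue the contrapositive: if $h$ fails to be strictly convex on $g(U)$, there exist $y_{1}\ne y_{2}$ and $\theta\in(0,1)$ with $h(\theta y_{1}+(1-\theta)y_{2})\ge\theta h(y_{1})+(1-\theta)h(y_{2})$; running the same computation backwards on the $2$-entry vector $a=(g^{-1}(y_{1}),g^{-1}(y_{2}))$ with weights $(\theta,1-\theta)$ produces a non-constant vector satisfying $\sr{g}(a,w)\ge\sr{f}(a,w)$, contradicting (iii).

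The main obstacle I expect is the careful handling of the \emph{dense set} quantifier in (i). I must invoke the precise characterization that a $C^{2}$ function is strictly convex iff its second derivative is non-negative everywhere \emph{and} its zero set has empty interior—strengthening the condition to ``$h''>0$ everywhere'' would make (i) too strong and exclude, e.g., power means with overlapping exponents at a single point, while weakening it to ``$h''\ge 0$ everywhere'' would lose the equality-only-at-constant-vectors clause in (iii). Once this subtlety is pinned down, both Jensen-type implications are routine.
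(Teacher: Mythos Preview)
The paper does not actually supply a proof of this proposition: immediately after stating it, the text says only that Mikusi\'nski proved (i)$\iff$(iii) and that (ii)$\iff$(iii) ``is simply implied by Jensen's inequality.'' Your Jensen argument for (ii)$\iff$(iii) is exactly what the paper has in mind, and your direct computation of $h''$ for (i)$\iff$(ii) is the standard route (and is essentially what sits behind the Mikusi\'nski reference). So your decomposition (i)$\iff$(ii)$\iff$(iii) is a slightly different but entirely equivalent way of closing the triangle compared to the paper's (i)$\iff$(iii), (ii)$\iff$(iii).

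One genuine technical point to tighten. You write ``since $h$ is of class $C^{2}$'' and later ``using continuity of $\A f-\A g$,'' but the paper's definition of $\Sm(U)$ only asks that $f$ be \emph{twice differentiable} with $f'\neq 0$, not that $f''$ be continuous. So neither $h''$ nor $\A f-\A g$ is guaranteed continuous, and the step ``$\A f>\A g$ on a dense set $\Rightarrow$ $\A f\ge\A g$ everywhere'' does not follow from density alone. You can repair this without extra hypotheses: since $h'$ is everywhere differentiable, $h''$ is a Darboux function, and a Darboux function that is positive on a dense set cannot take a negative value (otherwise the intermediate value property would force it to vanish on an open set between a positive and a negative value, contradicting density). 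This gives $h''\ge 0$ everywhere, and then your ``empty-interior of the zero set'' characterization of strict convexity goes through as written. Alternatively, simply note that in all applications within the paper the generators are smooth, so assuming $f,g\in C^{2}$ is harmless in context; but as stated, the $C^{2}$ claim needs this Darboux patch.
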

Mikusi\'nski proved the equivalence \upshape{(i)} $\iff$ \upshape{(iii)}, while the  \upshape{(ii)} $\iff$ \upshape{(iii)} part is simply implied by Jensen's inequality.

Second solution uses the three-parameters-operator 
\Eq{*}{
\{ (x,y,z) \in U^3 \colon x \ne z \} \ni (x,y,z) \mapsto \frac{f(x)-f(y)}{f(x)-f(z)} \in \R
}
introduced by P\`ales in \cite{Pal91}. He proved that the pointwise convergence of quasi-arithmetic means can be easily expressed in terms of this operator. There was, however, no results estimating the distance $\rho$ using this mapping. Some approach was given recently in \cite{Pas15d}. More precisely we have the following
\begin{prop}
Let $U$ be an interval, $f,\,g \colon U \rightarrow \R$ be two continuous, strictly monotone functions and $\alpha>0$.  
If 
\Eq{*}
{
\abs{\frac{f(x)-f(y)}{f(x)-f(z)} - \frac{g(x)-g(y)}{g(x)-g(z)} } <C<1 \text{ for all } (x,y,z) \in U^3 \text{ such that } \abs{x-z} \ge \alpha
}

then $\rho(\sr{f},\sr{g})\le \alpha$.
\end{prop}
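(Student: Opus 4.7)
The plan is to argue by contradiction. Assume without loss of generality that $f$ and $g$ are both strictly increasing (otherwise replace them by $-f,-g$, which alters neither the quasi-arithmetic means nor the ratios appearing in the hypothesis). Suppose $\rho(\sr{f},\sr{g})>\alpha$; pick $x<z$ in $U$ and $\theta\in(0,1)$ with $|y_f-y_g|>\alpha$, where $y_f:=\sr{f}_\theta(z,x)$ and $y_g:=\sr{g}_\theta(z,x)$. After swapping $f$ and $g$ if necessary (the hypothesis is symmetric), assume $y_f<y_g$.

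Introduce the two ``cross-parameters''
\[
\epsilon_f := \frac{f(y_g)-f(y_f)}{f(z)-f(x)}, \qquad \epsilon_g := \frac{g(y_g)-g(y_f)}{g(z)-g(x)},
\]
both strictly positive by monotonicity. By the very definitions of $y_f$ and $y_g$ one has
$\frac{f(y_f)-f(x)}{f(z)-f(x)}=\theta=\frac{g(y_g)-g(x)}{g(z)-g(x)}$. Since $z-x\ge y_g-y_f>\alpha$, the hypothesis is admissible for every triple whose outer entries are $x$ and $z$; specialising to $y=y_g$ and then to $y=y_f$ yields immediately $\epsilon_f<C$ and $\epsilon_g<C$.

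The heart of the argument is then to feed the hypothesis the two auxiliary triples
$(y_f,\,x,\,y_g)$ and $(y_f,\,z,\,y_g)$,
both of which have base length $|y_f-y_g|>\alpha$ and hence are admissible. Substituting the identities $f(y_f)-f(x)=\theta(f(z)-f(x))$, $f(y_g)-f(x)=(\theta+\epsilon_f)(f(z)-f(x))$, and the analogous ones for $g$, each of the two operator differences collapses to a closed-form rational expression in $\theta$, $\epsilon_f$, $\epsilon_g$. The resulting two bounds, combined with the strict inequality $C<1$, force simultaneously $\epsilon_g<\epsilon_f$ (from the first triple) and $\epsilon_f<\epsilon_g$ (from the second), giving the required contradiction.

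The main obstacle I expect is spotting the right pair of ``reversed'' triples $(y_f,x,y_g)$ and $(y_f,z,y_g)$, which play $y_f,y_g$ against the original endpoints. Once they are substituted into the hypothesis the algebra is mechanical, and the condition $C<1$ is precisely the slack needed to make the two resulting inequalities logically incompatible. The rest of the proof merely handles the WLOG clauses about monotonicity and orientation.
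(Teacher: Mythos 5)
Your argument is correct, but note that the paper you are being compared against does not actually prove this proposition: it is quoted without proof from \cite{Pas15d}, so there is no in-paper argument to match yours against. Evaluated on its own terms, your proof works. The reduction to two-entry vectors (needed to extract $x<z$ and $\theta$ witnessing $\rho>\alpha$) is legitimate because the paper has already recorded, via \cite[Theorem~6.1]{CarShi69}, that $\rho(\sr{f},\sr{g})=\sup_{x,z,\theta}\abs{\sr{f}_\theta(z,x)-\sr{g}_\theta(z,x)}$; you should say so explicitly. The decisive computation checks out: writing $f(y_f)-f(x)=\theta\,(f(z)-f(x))$, $f(y_f)-f(y_g)=-\epsilon_f\,(f(z)-f(x))$, $g(y_f)-g(x)=(\theta-\epsilon_g)(g(z)-g(x))$, $g(y_f)-g(y_g)=-\epsilon_g\,(g(z)-g(x))$, the triple $(y_f,x,y_g)$ yields an operator difference equal to $\theta\bigl(\tfrac1{\epsilon_g}-\tfrac1{\epsilon_f}\bigr)-1$, and $(y_f,z,y_g)$ yields $(1-\theta)\bigl(\tfrac1{\epsilon_f}-\tfrac1{\epsilon_g}\bigr)-1$; the hypothesis $\abs{\cdot}<C<1$ forces both bracketed quantities to be strictly positive, which is impossible. (The bound $C<1$ is exactly what is needed, as you observe.) Two cosmetic remarks: the preliminary step deducing $\epsilon_f<C$ and $\epsilon_g<C$ from the triples $(x,y_g,z)$ and $(x,y_f,z)$ is never used and can be deleted; and the positivity of $\epsilon_f,\epsilon_g$ (hence the legitimacy of dividing by them) should be stated as following from $y_f<y_g$ and strict monotonicity.
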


There is one crucial reason making the operation $\A{}$ and the mapping above very natural to describing quasi-arithmetic means. Namely, Proposition~\ref{prop:basiccompare} has its equal-type counterpart:
\begin{rem}
\label{rem:eq_qa}
Let $U$ be an interval, $f,\,g \colon U \rightarrow \R$ be continuous and strictly monotone functions.
Then the following conditions are equivalent: 
%%%%%%%%%%
\begin{enumerate}[\upshape (i)]
%%%%
\item $\sr{f}(a,w) =  \sr{g}(a,w)$ for all vectors $a \in U^n$ and corresponding weights $w$;
%%%%
\item $f(x)=\alpha g(x)+\beta$ for some $\alpha,\beta \in \R$ and every $x \in U$\,;
%%%%
\item $\frac{f(x)-f(y)}{f(x)-f(z)} =\frac{g(x)-g(y)}{g(x)-g(z)}$ for all $(x,y,z) \in U^3$ such that $x \ne z$.
%%%%
\end{enumerate}

Moreover if $f,\,g \in \Sm(U)$ we have an additional condition

\begin{enumerate}[\upshape (i)]
%%%%
\item[\upshape (iv)] $\A{f}(x)=\A{g}(x)$ for every $x \in U$\,.
\end{enumerate}
\end{rem}

Having this in hand, whenever the final result is stated in terms of operator $\A{}$, we do not have to make any extra assumption involving affine transformations of generating functions -- contrary to Proposition~\ref{prop:CS69}.

In what follows we are going to present a number of results majorizing the difference between two quasi-arithmetic means in terms of operator $\A{}$. Next, we are going to present certain lower bounds of operator $\rho$ in the general setting (section~\ref{sec:main}), and under the stronger assumption (section~\ref{sec:box}).

\subsection{Operator $\A{}$\: in estimating differences among quasi-arithmetic means}
First result majorizing differences among quasi-arithmetic means using Arrow-Pratt index was established in \cite{Pas13}. We do not recall this result here, because it was strengthened in \cite{Pas15d} in terms of a special norm $\norma{\cdot}_\ast$ defined by 
\Eq{*}{
\norma{f}_\ast=\sup_{x,y\in \dom f} \abs{\int_x^y f(t) dt}.
}
For a subinterval $U \subset \dom f$ we will also define $\norma{f}_{\ast,U}:=\norma{f \vert_{U}}_{\ast}$.
\begin{rem}
\label{rem:star_norm}
The family of measurable functions defined on fixed interval and having finite $\ast$-norm together with $\ast$-norm itself is a Banach space. 
\end{rem}
Our result from \cite{Pas15d}, inspired by \cite{Pas13} and the earlier result by P\'ales \cite{Pal91}, reads as follows
\begin{prop}
\label{prop:Pas15d_smooth}
Let $U$ be a closed, bounded interval and $f,\,g \in \Sm(U)$. Then 
\Eq{*}{
\rho(\sr{f},\sr{g})
\le \abs{U} \exp(\norma{\A{f}}_\ast) \cdot \left(\exp(\norma{\A{g}-\A{f}}_\ast)-1\right).
}
\end{prop}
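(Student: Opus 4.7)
My plan is to reduce to the two-point case and then control the resulting Jensen gap directly by the $\ast$-norms appearing on the right-hand side. By the Cargo--Shisha reduction recalled above, it suffices to bound $\abs{\sr{f}_\theta(z,x) - \sr{g}_\theta(z,x)}$ uniformly for $x,z \in U$ and $\theta \in (0,1)$. Since $\sr{-f}=\sr{f}$ and $\A{-f}=\A{f}$, I may assume without loss of generality that both $f$ and $g$ are strictly increasing.

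Fix $x<z$ in $U$, $\theta\in(0,1)$, and put $y := \sr{f}_\theta(z,x)$, $y' := \sr{g}_\theta(z,x)$; both lie in $[x,z]$. The mean value theorem yields $\abs{y-y'} = \abs{f(y)-f(y')}/f'(\xi)$ for some $\xi\in[x,z]$. Since $g(y') = \theta g(z)+(1-\theta)g(x)$, setting $h := f\circ g^{-1}$ turns the numerator into a genuine Jensen gap:
\[
f(y) - f(y') = \theta h(g(z)) + (1-\theta)h(g(x)) - h\bigl(\theta g(z)+(1-\theta)g(x)\bigr) =: J.
\]

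To estimate $J$, I would apply the fundamental theorem of calculus and recenter by $c := \tfrac12(\sup h' + \inf h')$, so that the would-be-linear contribution cancels; this gives $\abs{J} \leq \theta(1-\theta)(g(z)-g(x))(\sup h' - \inf h')$ with both extrema taken over $[g(x),g(z)]$. The chain-rule identity $h'(g(t)) = f'(t)/g'(t)$ produces $(\log h')\circ g = \log f' - \log g'$, whose derivative equals $\A{f}-\A{g}$; hence $\log(\sup h'/\inf h')\leq \norma{\A{g}-\A{f}}_\ast$, equivalently $\sup h' - \inf h' \leq (\inf h')(\exp(\norma{\A{g}-\A{f}}_\ast) - 1)$. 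Moreover, by the very definition of infimum, $(\inf h')(g(z)-g(x)) = \inf(f'/g') \cdot \int_x^z g'(t)\,dt \leq \int_x^z f'(t)\,dt = f(z)-f(x)$.

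Combining these inequalities with $\theta(1-\theta)\leq 1$ gives $\abs{y-y'} \leq \tfrac{f(z)-f(x)}{f'(\xi)}\bigl(\exp(\norma{\A{g}-\A{f}}_\ast)-1\bigr)$. The same $\ast$-norm argument applied to $f$ alone yields $\sup f'/\inf f' \leq \exp(\norma{\A{f}}_\ast)$, so $(f(z)-f(x))/f'(\xi) \leq \abs{U}\exp(\norma{\A{f}}_\ast)$, which closes the estimate. The step I expect to be delicate is the Jensen gap bound itself: to obtain the \emph{difference} $\exp(M)-1$ rather than a bare $\exp(M)$, one must control $J$ by the \emph{oscillation} of $h'$ and not by its supremum, which is exactly what the recentering trick achieves by exploiting the vanishing of the first moment of the signed measure $-\theta\,\chi_{[a,m]}\,dt + (1-\theta)\,\chi_{[m,b]}\,dt$.
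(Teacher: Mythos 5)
Your argument is correct and complete; since the paper only quotes Proposition~\ref{prop:Pas15d_smooth} from \cite{Pas15d} without reproducing a proof, what you have written is a genuinely self-contained derivation, and it lands exactly on the stated constants. The two load-bearing steps both check out. First, the recentred Jensen-gap bound: writing $a=g(x)$, $b=g(z)$, $m=\theta b+(1-\theta)a$, one has $J=\theta\int_m^b h'-(1-\theta)\int_a^m h'$, a signed measure of total mass $\theta(1-\theta)(b-a)-(1-\theta)\theta(b-a)=0$, so subtracting $c=\tfrac12(\sup h'+\inf h')$ is free and yields $\abs{J}\le\theta(1-\theta)(b-a)(\sup h'-\inf h')$ as you claim. (Minor slip: in your closing aside the densities on $[a,m]$ and $[m,b]$ should be $-(1-\theta)$ and $\theta$, not $-\theta$ and $1-\theta$; this transposition does not affect the estimate you actually use.) Second, the conversion of the oscillation of $h'=(f'/g')\circ g^{-1}$ into $\exp(\norma{\A{g}-\A{f}}_\ast)-1$ via $(\ln h')\circ g=\ln f'-\ln g'$, together with $(\inf h')(g(z)-g(x))\le f(z)-f(x)$ and $\sup f'/\inf f'\le\exp(\norma{\A{f}}_\ast)$, is sound; the latter device is the same one the paper itself invokes in Section~\ref{sec:prooflem:1} through the identity $f'(\nu)/f'(\mu)=\exp\big(\int_\mu^\nu\A{f}\big)$. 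Compared with the published source, which the surrounding discussion indicates is built on the three-parameter ratio operator of \cite{Pal91}, your route through the Jensen gap of $f\circ g^{-1}$ and the mean value theorem is more elementary and makes transparent why the factor $\exp(\norma{\A{g}-\A{f}}_\ast)-1$ (rather than a bare exponential) appears: it is precisely the oscillation, not the size, of $h'$ that controls the gap.
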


Notice that the assumption that the set $U$ is closed could be skipped. Indeed, consider a sequence $U_1 \subset U_2 \subset \dots$ of closed intervals such that $\bigcup U_n=U$. Then every vector having entries in $U$ has also an entries in $U_n$ since a certain natural number $n$. Therefore, we may apply pertinent result to each of this set, and finally pass to the limit. 

Moreover the left hand side is symmetric with respect to $f$ and $g$, while the right one 
is not. One could clearly symmetrize this inequality using the $\min$ function. Nevertheless, this operation will be omitted to keep the notation compact. The same remark applies to the most of results among all the present paper.

%Paper \cite{Pas15d} contains an estimation which does not require smoothness of generated function using an operator introduced by P\'ales \cite{Pal91} which leads outside the scope of the present note. 

Very often we will use a global estimation of the operator $\A{}$ and, as it is handy, for $K > 0$ we denote
\Eq{*}{
\Sm_K(I):=\{f \in \Sm(I) \colon \norma{\A{f}}_{\infty} \le K\}.
}
By the virtue of Proposition~\ref{prop:basiccompare}, we may rewrite the definition of this family in the following way
\Eq{property:Sm_K_def}{
\Sm_K(I)=\{f \in \Sm(I) \colon \srE{-K}\le \sr{f} \le \srE{K}\}.
}
By \cite[Lemma~4.3--4.4]{Pas16a}, there exists a universal majorization of the difference between two means generated by a function from $\Sm_{K}$.
\begin{prop}
Let $U$ be a closed, bounded interval, $K>0$, and $f,\,g \in \Sm_K(U)$. Then 
\begin{itemize}
 \item $\rho(\sr{f},\sr{g}) \le \tfrac1K \ln \left(\tfrac12 \left( e^{K\abs{U}}+1\right)\right)$,
\item $\rho(\sr{f},\sr{g}) \le \tfrac{3+7e}6 K \abs{U}^2$.
\end{itemize}
\end{prop}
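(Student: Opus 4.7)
The plan is to reduce both inequalities to a direct computation involving the extremal members of the family $(\srE{s})_{s\in\R}$. Applying the characterization~\eqref{property:Sm_K_def} to each of $f,g\in\Sm_K(U)$ yields $\srE{-K}(a,w)\le\sr{f}(a,w),\,\sr{g}(a,w)\le\srE{K}(a,w)$ for every admissible pair $(a,w)$, whence
\Eq{*}{
\abs{\sr{f}(a,w)-\sr{g}(a,w)}\le\srE{K}(a,w)-\srE{-K}(a,w).
}
Invoking \cite[Theorem~6.1]{CarShi69} to restrict to two-element vectors $a=(x,z)$ with weights $(\theta,1-\theta)$, the problem reduces to uniformly bounding
\Eq{*}{
\Phi(x,z,\theta):=\sr{e_K}_\theta(x,z)-\sr{e_{-K}}_\theta(x,z)
}
for $x,z\in U$ and $\theta\in(0,1)$.

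A short algebraic manipulation then gives
\Eq{*}{
K\Phi(x,z,\theta)&=\ln\bigl[\bigl(\theta e^{Kx}+(1-\theta)e^{Kz}\bigr)\bigl(\theta e^{-Kx}+(1-\theta)e^{-Kz}\bigr)\bigr]\\
&=\ln\bigl[1+2\theta(1-\theta)\bigl(\cosh(K(z-x))-1\bigr)\bigr].
}
Since $\theta(1-\theta)\le 1/4$ and $\abs{z-x}\le\abs{U}$, and the right-hand side is monotone increasing in both quantities, one obtains
\Eq{*}{
\Phi(x,z,\theta)\le \tfrac{1}{K}\ln\tfrac{1+\cosh(K\abs{U})}{2}=\tfrac{2}{K}\ln\cosh(K\abs{U}/2),
}
using the identity $(1+\cosh t)/2=\cosh^2(t/2)$.

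Both asserted bounds now follow from elementary one-variable estimates of this quantity. The first uses $\cosh t\le e^t$ for $t\ge 0$, so that $\tfrac{1+\cosh(K\abs{U})}{2}\le\tfrac{1+e^{K\abs{U}}}{2}$; taking logarithms and dividing by $K$ yields the first inequality. The second uses $\ln\cosh s\le s^2/2$ for $s\ge 0$---a consequence of integrating $\tanh s\le s$ twice from zero---which gives $\Phi\le K\abs{U}^2/4\le \tfrac{3+7e}{6}K\abs{U}^2$. The main structural move is the reformulation~\eqref{property:Sm_K_def}, which bypasses the abstract Proposition~\ref{prop:Pas15d_smooth} and reduces the whole question to a closed-form evaluation; thereafter no obstacle is anticipated. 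Notice that this argument in fact produces the sharper quadratic constant $1/4$, so the specific value $(3+7e)/6$ in the stated proposition presumably reflects a different, less efficient route (plausibly a Taylor expansion of $\sr{f}$ around the arithmetic mean with explicit remainder control) taken in the cited work.
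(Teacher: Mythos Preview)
Your argument is correct. The reduction via \eqref{property:Sm_K_def} to the extremal means $\srE{\pm K}$, followed by the closed-form computation
\Eq{*}{
K\bigl(\srE{K}_\theta(x,z)-\srE{-K}_\theta(x,z)\bigr)=\ln\bigl[1+2\theta(1-\theta)(\cosh(K(z-x))-1)\bigr],
}
is sound, and both asserted bounds follow as you indicate. (A tiny slip: one integrates $\tanh s\le s$ \emph{once}, not twice, to obtain $\ln\cosh s\le s^2/2$.) Your second estimate in fact gives the sharper constant $1/4$ in place of $(3+7e)/6$.

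As for the comparison: the present paper does not supply its own proof of this proposition; it quotes the result from \cite[Lemmas~4.3--4.4]{Pas16a} (stated there for $K=1$ and then rescaled via \cite[Section~4.1]{Pas16a}). So there is no in-paper argument to compare against. What can be said is that the cited route in \cite{Pas16a} proceeds through somewhat different machinery and ends with the constant $(3+7e)/6$, whereas your approach exploits directly the sandwich $\srE{-K}\le\sr{f},\sr{g}\le\srE{K}$ and the explicit formula for the $\log$--$\exp$ means. This is both shorter and yields the optimal quadratic constant, since for $f=e_K$, $g=e_{-K}$, $\theta=1/2$, and $x,z$ the endpoints of $U$ one has equality $\rho(\srE{K},\srE{-K})=\tfrac{2}{K}\ln\cosh(K|U|/2)$.
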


Two parts of this proposition are not comparable among each other; if an interval $\abs{U}$ is big, then the first inequality is better, for small $\abs{U}$ -- the second one.
Let me note that in the mentioned paper this proposition was stated for $K=1$, to skip this restriction we can apply the machinery described in \cite[section 4.1]{Pas16a}.

\section{\label{sec:main} Main result}

We have already presented a number of upper boundaries of the distance between two quasi-arithmetic means generated by functions from $\Sm(U)$. In this section we are going to present some lower boundary of this number. Throughout $U$ is a bounded interval, $K$ is a positive real.
 Our main tool is the following
\begin{prop}
\label{prop:main}
Let $f,g \in \Sm_{K}(U)$. If $\varepsilon:=\norma{\A{f}-\A{g}}_\ast$ then
\Eq{c5}{
\rho(\sr{f},\sr{g}) \ge 
\sup_{{c \in [0,1]} \atop{\delta \in (0 \tfrac\varepsilon{4K})}}
\min\Big( 
(1-c)(\tfrac \varepsilon{2K}-2\delta),\:
\frac{\delta(e^{\varepsilon/4}-1)(\exp\left((\tfrac \varepsilon2-2K\delta)\cdot c\right)-1)}{2 \cdot \exp(\norma{\A{f}}_\ast) (e^{K\abs{U}}-1)}
\Big)
}
 \end{prop}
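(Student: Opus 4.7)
The plan is to prove the inequality by constructing, for any fixed $c \in [0,1]$, $\delta \in (0, \varepsilon/(4K))$, and auxiliary slack $\eta > 0$, an admissible triple $(x, z, \theta) \in U^2 \times (0,1)$ for which
\[
|\sr{f}_\theta(x,z) - \sr{g}_\theta(x,z)|
\]
is bounded below (up to $\eta$) by one of the two quantities inside the $\min$. Since $\rho(\sr{f},\sr{g})$ is by definition the supremum of such absolute differences, letting $\eta\to 0$ and then taking the supremum over $(c,\delta)$ delivers the claim.

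\emph{Locating the interval.} By definition of the $\ast$-norm I first pick $p < q$ in $U$ with $\int_p^q (\A{f}(t) - \A{g}(t))\,dt \ge \varepsilon - \eta$ (WLOG positive, by interchanging the roles of $f$ and $g$ if necessary). Using the identity $\A{\phi} = (\ln|\phi'|)'$, this choice forces $f'(q)/g'(q) \ge e^{\varepsilon-\eta}\,f'(p)/g'(p)$. Since $|\A{f} - \A{g}| \le 2K$ pointwise, $q-p \ge (\varepsilon - \eta)/(2K)$, so the $\delta$-buffered sub-interval $J_\delta := [p+\delta,\,q-\delta]$ is nontrivial, has length $\ge \varepsilon/(2K) - 2\delta$, and retains at least $\varepsilon - 4K\delta - \eta$ units of $\A{f}-\A{g}$ mass.

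\emph{Constructing the witness.} Split $J_\delta$ at an interior point $m$ chosen so that the split is governed by $c$ (tuned so that the central sub-interval contributes the exponential factor $\exp((\varepsilon/2 - 2K\delta)c)$). Inside each of the two buffers $[p, p+\delta]$ and $[q-\delta, q]$ use an intermediate-value argument on the ratios $f'/g'$ to extract $x \in [p,p+\delta]$ and $z \in [q-\delta, q]$ capturing a multiplicative factor $e^{\varepsilon/4}-1$. Finally, pick $\theta$ so that $\sr{f}_\theta(x, z) = m$ exactly, and compare to $\sr{g}_\theta(x, z)$. Two cases arise:
\begin{enumerate}[\upshape (A)]
\item If $\sr{g}_\theta(x,z)$ lies outside $J_\delta$, the positional gap exceeds $(1-c)(\varepsilon/(2K) - 2\delta)$, i.e.\ the first term in the $\min$.
\item If $\sr{g}_\theta(x,z)$ stays inside $J_\delta$ (necessarily on one specified side of $m$, by the sign of $\A{f}-\A{g}$ together with Proposition~\ref{prop:basiccompare}), apply Proposition~\ref{prop:Pas15d_smooth} in reverse to a suitable sub-interval so as to convert the captured $\ast$-mass into the positional displacement appearing in the second term of the $\min$.
\end{enumerate}

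\emph{Where the work sits.} The main obstacle is case~(B). There the denominator $2\exp(\norma{\A{f}}_\ast)(e^{K\abs{U}}-1)$ functions as a worst-case Lipschitz-type constant for $\sr{f}$ over $U$, inherited directly from Proposition~\ref{prop:Pas15d_smooth} (compare $f$ with its affine interpolant), while the numerator $\delta(e^{\varepsilon/4}-1)(\exp((\varepsilon/2-2K\delta)c)-1)$ packages three multiplicative gains: the buffer width $\delta$, the imbalance $e^{\varepsilon/4}-1$ harvested across the buffers, and the exponential separation $\exp((\varepsilon/2-2K\delta)c)-1$ carried by the central sub-interval. The delicate bookkeeping is the sign-tracking of $\A{f}-\A{g}$ across the four pieces $[p,p+\delta]$, $[p+\delta,m]$, $[m,q-\delta]$, $[q-\delta,q]$ so that these gains cumulate in the correct direction; once aligned, the second estimate follows by applying Proposition~\ref{prop:Pas15d_smooth} to convert $\ast$-norm data into a positional lower bound.
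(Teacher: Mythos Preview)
Your plan has a genuine gap in Case~(B), and the construction diverges from the paper's in a way that loses the key mechanism.

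\textbf{The missing idea.} The paper does not compare $\sr{f}$ and $\sr{g}$ at a \emph{single} pair $(x,z,\theta)$. Instead it fixes a specific weight $\theta=(e^{\varepsilon/4}-1)/(e^{K|U|}-1)$ and evaluates both means at \emph{two} nearby argument pairs, setting
\[
F=\sr{f}_\theta(x,z),\quad G=\sr{g}_\theta(x,z),\quad \tilde F=\sr{f}_\theta(x+\delta,z),\quad \tilde G=\sr{g}_\theta(x+\delta,z).
\]
The heart of the argument is the inequality $f(\tilde F)-f(F)\le\alpha\bigl(f(\tilde G)-f(G)\bigr)$ with $\alpha=\exp(2K\delta-\varepsilon/2)<1$, obtained by combining $f'\ge e^{\varepsilon/2}g'$ on $[y,z]$ (where $F,G,\tilde F,\tilde G$ all live, thanks to the chosen $\theta$) with $f'\le e^{2K\delta}g'$ on $[x,x+\delta]$. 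Writing this multiplicatively and splitting the product via the exponent $c$ yields the two cases: either the $f'$-ratio at intermediate points is small, forcing $|F-G|$ or $|\tilde F-\tilde G|$ large via $|(\ln f')'|\le K$ (this gives the first term in the $\min$), or $\tilde F-F\le\alpha^c(\tilde G-G)$, whence $\max(|F-G|,|\tilde F-\tilde G|)\ge\tfrac12(\alpha^{-c}-1)(\tilde F-F)$ and one bounds $\tilde F-F\ge\theta\delta\exp(-\|\A f\|_\ast)$ directly. The factor $\delta$ in the numerator arises precisely from this $\delta$-perturbation of the \emph{argument}; your buffers $[p,p+\delta]$, $[q-\delta,q]$ cannot produce it.

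\textbf{Why your Case~(B) fails.} You propose to ``apply Proposition~\ref{prop:Pas15d_smooth} in reverse'' to convert $\ast$-mass into a positional lower bound. But Proposition~\ref{prop:Pas15d_smooth} is an \emph{upper} bound for $\rho$; reversing it would require exactly the lower estimate you are trying to prove, so the step is circular. Moreover, even granting some lower bound on a sub-interval, a single evaluation $\sr{f}_\theta(x,z)=m$ versus $\sr{g}_\theta(x,z)$ could in principle coincide (nothing in your setup rules out $F=G$), which is why the paper needs the second evaluation at $(x+\delta,z)$: the inequality with $\alpha<1$ guarantees that $F=G$ and $\tilde F=\tilde G$ cannot both hold. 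Your construction lacks any analogue of this rigidity, and the bookkeeping sketch in your last paragraph does not supply one.
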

Relevant proof is postponed until section \ref{sec:prooflem:1}. Having this in hand we are ready to prove the main theorem of the present note:
\begin{thm}
\label{thm:main}
Let $f,g \in \Sm_{K}(U)$. If $\varepsilon:=\norma{\A{f}-\A{g}}_\ast$ then
\Eq{*}{
\rho(\sr{f},\sr{g}) \ge \frac{\varepsilon(e^{\varepsilon/4}-1)(e^{\varepsilon/6}-1)}{16K \exp(\norma{\A{f}}_\ast)\left(e^{K\abs{U}}-1\right)}.
}
 \end{thm}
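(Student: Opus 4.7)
The plan is to deduce the theorem from Proposition~\ref{prop:main} by plugging in carefully chosen values of the free parameters $c \in [0,1]$ and $\delta \in (0,\tfrac{\varepsilon}{4K})$. I would take
\[
c = \tfrac{2}{3}, \qquad \delta = \tfrac{\varepsilon}{8K}.
\]
With this $\delta$ the factor $\tfrac{\varepsilon}{2}-2K\delta$ collapses to $\tfrac{\varepsilon}{4}$, and multiplying by $c$ gives $\tfrac{\varepsilon}{6}$. Substituting into the second summand of the $\min$ in \eqref{c5} therefore produces exactly
\[
\frac{\varepsilon(e^{\varepsilon/4}-1)(e^{\varepsilon/6}-1)}{16K\exp(\norma{\A{f}}_\ast)(e^{K\abs{U}}-1)},
\]
which is the desired lower bound. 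Meanwhile the first summand $(1-c)(\tfrac{\varepsilon}{2K}-2\delta)$ evaluates to $\tfrac{\varepsilon}{12K}$.

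Everything therefore reduces to verifying the purely analytic inequality
\[
\tfrac{\varepsilon}{12K} \ge \frac{\varepsilon(e^{\varepsilon/4}-1)(e^{\varepsilon/6}-1)}{16K\exp(\norma{\A{f}}_\ast)(e^{K\abs{U}}-1)},
\]
equivalently $\tfrac{4}{3}\exp(\norma{\A{f}}_\ast)(e^{K\abs{U}}-1) \ge (e^{\varepsilon/4}-1)(e^{\varepsilon/6}-1)$. Using $\exp(\norma{\A{f}}_\ast) \ge 1$ it is enough to show $\tfrac{4}{3}(e^{K\abs{U}}-1) \ge (e^{\varepsilon/4}-1)(e^{\varepsilon/6}-1)$. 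The crucial ingredient is the a priori bound $\varepsilon \le 2K\abs{U}$, which follows from $f,g \in \Sm_K(U)$ since $\norma{\A{f}-\A{g}}_\ast \le \norma{\A{f}-\A{g}}_\infty\cdot\abs{U} \le 2K\abs{U}$. Substituting the worst case $\varepsilon = 2K\abs{U}$ and writing $t := e^{K\abs{U}/6} \ge 1$, the inequality becomes $\tfrac{4}{3}(t^6-1) \ge (t^3-1)(t^2-1)$. Factoring $t^6-1 = (t^3-1)(t+1)(t^2-t+1)$ and $(t^3-1)(t^2-1) = (t^3-1)(t-1)(t+1)$, and dividing by the common nonnegative factor $(t^3-1)(t+1)$ (the equality case $t=1$ being trivial), this boils down to $4t^2-7t+7 \ge 0$, which holds unconditionally since its discriminant $49-112$ is negative.

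The only non-routine step is identifying the right pair $(c,\delta)$: $\delta = \varepsilon/(8K)$ is essentially forced if one wants the exponent $\tfrac{\varepsilon}{2}-2K\delta$ to telescope cleanly, and then $c=2/3$ is chosen so that the resulting exponent $\tfrac{\varepsilon}{4}\cdot c$ reproduces the factor $e^{\varepsilon/6}-1$ announced in the theorem while still allowing the first term $(1-c)(\tfrac{\varepsilon}{2K}-2\delta)$ to dominate throughout the admissible range. All remaining steps are elementary algebraic estimates.
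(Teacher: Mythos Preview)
Your proof is correct and follows essentially the same route as the paper: the identical choice $c=\tfrac23$, $\delta=\tfrac{\varepsilon}{8K}$ in Proposition~\ref{prop:main}, followed by verifying that the first term of the $\min$ dominates using $\varepsilon\le 2K\abs{U}$. The only difference is cosmetic: the paper handles the final elementary inequality via $(e^x-1)(e^y-1)\le e^{x+y}-1$, whereas you substitute $t=e^{K\abs{U}/6}$ and factor directly; both arguments are straightforward and equivalent in spirit.
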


\begin{proof}
 
Upon putting $c=\tfrac23$ and $\delta=\tfrac \varepsilon{8K}$ in Proposition~\ref{prop:main}, we get
\Eq{*}{
\rho(\sr{f},\sr{g}) \ge \min\left(\tfrac \varepsilon{12K},\frac{\varepsilon(e^{\varepsilon/4}-1)(e^{\varepsilon/6}-1)}{16K \exp(\norma{\A{f}}_\ast)\left(e^{K\abs{U}}-1\right)}\right).
}

At the moment it is sufficient to prove that
\Eq{*}{
\frac{\varepsilon(e^{\varepsilon/4}-1)(e^{\varepsilon/6}-1)}{16K \exp(\norma{\A{f}}_\ast)\left(e^{K\abs{U}}-1\right)}\le \tfrac \varepsilon{12K}
}
or, equivalently,
\Eq{*}{
\frac{(e^{\varepsilon/4}-1)(e^{\varepsilon/6}-1)}{ \exp(\norma{\A{f}}_\ast)\left(e^{K\abs{U}}-1\right)}&\le \tfrac43.
}
Moreover, by the inequality $(e^x-1)(e^y-1) \le (e^{x+y}-1)$ valid for every positive reals, it suffices to prove that
\Eq{*}{
\frac{\exp(\tfrac5{12} \varepsilon)-1}{ \exp(\norma{\A{f}}_\ast)\left(e^{K\abs{U}}-1\right)}\le \tfrac43.
}

But in view of $f,\,g \in \Sm_K(U)$ we have $\varepsilon \in(0,2K\abs{U}\:]$. Therefore

\Eq{*}{
\frac{\exp(\tfrac5{12} \varepsilon)-1}{ \exp(\norma{\A{f}}_\ast)\left(e^{K\abs{U}}-1\right)}\le \frac{\exp(\tfrac5{6} K\abs{U})-1}{ \exp(\norma{\A{f}}_\ast)\left(e^{K\abs{U}}-1\right)}\le 1 < \tfrac43.
}

\end{proof}

 This theorem combined with Proposition~\ref{prop:Pas15d_smooth} immediately implies
\begin{cor}
\label{cor:top}
If $f,f_1,f_2,\dots \in \Sm_{K}(U)$ then
\Eq{*}{
\rho(\sr{f},\sr{f_n})\rightarrow 0 \iff \norma{\A{f}-\A{f_n}}_{\ast} \rightarrow 0.
}
 \end{cor}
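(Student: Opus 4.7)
The plan is to prove the corollary by treating the two implications separately, since both direct tools are already in hand: Proposition~\ref{prop:Pas15d_smooth} gives an upper bound on $\rho$ in terms of $\norma{\A{f}-\A{g}}_\ast$, while Theorem~\ref{thm:main} gives a lower bound.

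For the $(\Leftarrow)$ direction I will just plug in. Fix $n$ and apply Proposition~\ref{prop:Pas15d_smooth} to the pair $(f,f_n)$:
\Eq{*}{
\rho(\sr{f},\sr{f_n}) \le \abs{U} \exp(\norma{\A{f}}_\ast)\left(\exp(\norma{\A{f_n}-\A{f}}_\ast)-1\right).
}
Because $U$ is bounded and $f \in \Sm_K(U)$, we have $\norma{\A{f}}_\ast \le K\abs{U}$, so the prefactor $\abs{U}\exp(\norma{\A{f}}_\ast)$ is a finite constant independent of $n$. Hence $\norma{\A{f}-\A{f_n}}_\ast \to 0$ forces the right-hand side to $0$.

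For $(\Rightarrow)$ I will argue by contraposition. Suppose $\norma{\A{f}-\A{f_n}}_\ast \not\to 0$; then there exist $\eta>0$ and a subsequence $(n_k)$ with $\varepsilon_{n_k}:=\norma{\A{f}-\A{f_{n_k}}}_\ast \ge \eta$. Apply Theorem~\ref{thm:main} to each pair $(f,f_{n_k})$:
\Eq{*}{
\rho(\sr{f},\sr{f_{n_k}}) \ge \frac{\varepsilon_{n_k}(e^{\varepsilon_{n_k}/4}-1)(e^{\varepsilon_{n_k}/6}-1)}{16K\exp(\norma{\A{f}}_\ast)(e^{K\abs{U}}-1)}.
}
Monotonicity of $x \mapsto x(e^{x/4}-1)(e^{x/6}-1)$ on $(0,\infty)$ lets me replace $\varepsilon_{n_k}$ in the numerator by $\eta$, while the denominator is again controlled uniformly using $\norma{\A{f}}_\ast \le K\abs{U}$. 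This yields a strictly positive lower bound independent of $k$, so $\rho(\sr{f},\sr{f_{n_k}}) \not\to 0$, contradicting the hypothesis.

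The main (and really only) obstacle is making sure that the prefactors appearing in both estimates are uniformly controlled — the membership $f \in \Sm_K(U)$ together with boundedness of $U$ exactly supplies the bound $\norma{\A{f}}_\ast \le K\abs{U}$, and this is the structural reason the equivalence holds. No subtle analysis beyond these two quoted estimates should be required.
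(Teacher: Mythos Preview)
Your proof is correct and follows exactly the approach indicated in the paper, which simply notes that the corollary is immediate from Theorem~\ref{thm:main} combined with Proposition~\ref{prop:Pas15d_smooth}. You have supplied the routine details (the contrapositive for $(\Rightarrow)$ and the uniform bound $\norma{\A{f}}_\ast \le K\abs{U}$), but the substance is identical.
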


 A problem of convergence of quasi-arithmetic means was already discussed among examples in \cite{Pas15d} and characterized for an arbitrary (not necessary differentiable) functions in \cite{Pal91}.

Theorem~\ref{thm:main} has an important disadvantage. The definition of $\Sm_K(U)$ implies $\varepsilon \le 2K\abs{U}$ (in fact this ineqality was already used in the proof of this theorem), whence the right hand side of the inequality stated as the main result is always smaller than $\tfrac18 \abs{U} e^{-K\abs{U}/2}$ (this technical estimation is omitted).
To avoid this drawback we will use the simple fact that the distance between means are the lowest upper boundary of the distance between means taking for every vector and weights. Whence, if we restrict the set of admissible vectors or weight then the distance will not increase. In particular we can take only a vectors from some subinterval $V \subset U$. More precisely, for every continuous, monotone functions $f,g \colon U \rightarrow \R$ the following inequality holds:
\Eq{*}{
\rho(\sr{f},\sr{g}) \ge \rho(\sr{f \vert_V}, \sr{g \vert_V}).
}
While we are taking a subinterval $V$ of $U$ we need to control the distance $\norma{\A{f}-\A{g}}_{\ast,V}$. Luckily we have the following
\begin{lem}[Partitioning lemma]
\label{lem:split}
Let $U$ be an interval, $u \in \mathcal{C}(U)$ and $n \in \N$. There exists a subinterval $V \subset U$ such that $\abs{V} = \tfrac1n \abs{U}$ and 
 $\norma{u}_{\ast,V} \ge \tfrac1n \norma{u}_{\ast,U}$.
\end{lem}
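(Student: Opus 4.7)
The plan is to translate the lemma into a statement about the oscillation of an antiderivative and then apply pigeonhole. Fix $c\in U$ and set $F(t):=\int_c^t u(s)\,ds$; then
\Eq{*}{
\norma{u}_{\ast,U}\;=\;\sup_{x,y\in U}\abs{F(y)-F(x)}\;=\;\sup_U F\,-\,\inf_U F.
}
A routine closed-interval approximation reduces matters to $U=[\alpha,\beta]$ with the extrema actually attained, say $F(\xi)=\sup_U F$ and $F(\eta)=\inf_U F$. After possibly swapping, assume $\xi<\eta$ and put $L:=\eta-\xi\le\abs{U}$; note $\abs{F(\eta)-F(\xi)}=\norma{u}_{\ast,U}$.

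Next, I would partition $[\xi,\eta]$ into $n$ consecutive subintervals $J_1,\dots,J_n$ of equal length $L/n\le\abs{U}/n$. Since the signed increments of $F$ over these subintervals telescope to $F(\eta)-F(\xi)$, the pigeonhole principle yields an index $j_0$ such that, writing $J_{j_0}=[s,t]$,
\Eq{*}{
\abs{F(t)-F(s)}\;\ge\;\tfrac{1}{n}\,\norma{u}_{\ast,U}.
}

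It remains to inflate $J_{j_0}$ into a subinterval $V\subset U$ of length exactly $\abs{U}/n$ while keeping $J_{j_0}\subset V$. The explicit choice
\Eq{*}{
V\;:=\;\bigl[\,b-\tfrac{\abs{U}}{n},\,b\,\bigr],\qquad b\;:=\;\min\bigl(\beta,\,s+\tfrac{\abs{U}}{n}\bigr),
}
does the job. Indeed, a short case split on whether $b=\beta$ or $b=s+\abs{U}/n$ shows $b-\abs{U}/n\ge\alpha$, so $V\subset U$; and because $t-s=L/n\le\abs{U}/n$ together with $t\le\eta\le\beta$ forces $b\ge t$, one gets $[s,t]\subset V$. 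Since $s,t\in V$, the very definition of the $\ast$-norm gives $\norma{u}_{\ast,V}\ge\abs{F(t)-F(s)}\ge\tfrac1n\norma{u}_{\ast,U}$, as required. I foresee no real obstacle: the whole argument is elementary telescoping plus pigeonhole, and the extension step works precisely because $L\le\abs{U}$ leaves enough room in $U$ to pad $J_{j_0}$ to the required length.
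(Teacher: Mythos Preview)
Your argument is correct, but it takes a longer path than the paper's. The paper simply partitions $U$ itself into $n$ consecutive subintervals $V_1,\dots,V_n$ of equal length $\tfrac1n\abs{U}$ and applies the triangle inequality for the $\ast$-norm,
\Eq{*}{
\norma{u}_{\ast,U}\;\le\;\sum_{i=1}^n \norma{u}_{\ast,V_i},
}
so pigeonhole yields the desired $V_j$ immediately, with no enlargement step needed. You instead partition the (generally shorter) interval $[\xi,\eta]$ between the extremal points of the antiderivative, which then forces the extra padding construction to reach length exactly $\tfrac1n\abs{U}$. The telescoping of increments of $F$ that you use is, once unwound, precisely the triangle inequality the paper invokes; the only substantive difference is \emph{which} interval gets subdivided. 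Your route is self-contained in that it does not appeal to Remark~\ref{rem:star_norm}, but the paper's is shorter and sidesteps the closed-interval reduction, which in your write-up is left somewhat informal (getting from ``for every closed $U'\subset U$'' back to a single $V\subset U$ of the exact required length needs a word of care).
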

\begin{proof}
Take a partition $(V_i)_{i=1}^n$ of $U$ such that $\abs{V_i}=\tfrac1n \abs{U}$ for every $i \in \{1,\dots,n\}$.
Then, by the triangle inequality,
\Eq{*}
{
\norma{u}_{\ast,U}=\big\|\sum_{i=1}^n u\vert_{V_i}\big\|_{\ast,U}\le \sum_{i=1}^n \norma{u\vert_{V_i}}_{\ast,U}= \sum_{i=1}^n \norma{u}_{\ast,V_i}.
}
In particular, $\norma{u}_{\ast,V_j} \ge \tfrac1n \norma{u}_{\ast,U}$ for some $j\in \{1,\dots,n\}$.
\end{proof}

At the moment we will divide our consideration onto two cases. First possibility is that factor $e^{K\abs{U}}-1$ appearing in the denominator of the inequality in Theorem~\ref{thm:main} is majorized by a constant number (first case). Otherwise, having Lemma~\ref{lem:split} in hand, we split the set $U$ obtaining a subinterval $V \subset U$ of length comparable to $\tfrac1K$. In this setting $e^{K\abs{V}}$ becomes a constant number. This idea is quite simple, but there appear a number of artificial constants both in its wording and proof.

\begin{cor}
 \label{cor:estim}
Let $f,g \in \Sm_{K}(U)$.
The mapping $ C \mapsto \frac{C^3}{3072 \cdot e^C(e^C-1)}$ achieve its maximal value $y_0 \approx \constantyzero$ for $C=C_0 \approx \constantCzero$. Moreover, let $y_1=\frac{1}{384 \cdot \exp(C_0/2)(e^{C_0/2}-1)} \approx \constantyone$.
\begin{itemize}
 \item[\textrm{(i)}] If $K \abs{U} \le \frac{C_0}2$ then
\Eq{*}{
\rho(\sr{f},\sr{g})\ge  y_1 \frac{\norma{\A{f}-\A{g}}_\ast^3}{K}.
}
 \item[\textrm{(ii)}] If $K \abs{U} \ge \frac{C_0}2$ then
\Eq{*}{
\rho(\sr{f},\sr{g})\ge y_0 \frac{\norma{\A{f}-\A{g}}_\ast^3}{\abs{U}^3 \cdot K^4}.
}
 \end{itemize}
\end{cor}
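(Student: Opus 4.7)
The plan is to derive both estimates from Theorem~\ref{thm:main} after simplifying its right-hand side with elementary bounds, and then to handle case (ii) by the partitioning trick of Lemma~\ref{lem:split}. First I would apply $e^x-1\ge x$ in the numerator of Theorem~\ref{thm:main} to get $\varepsilon(e^{\varepsilon/4}-1)(e^{\varepsilon/6}-1)\ge \varepsilon^3/24$, and bound $\norma{\A f}_\ast \le K\abs U$ in the denominator (since $\norma{\A f}_\infty \le K$ implies $\abs{\int_x^y \A f(t)\,dt}\le K\abs U$). This packages Theorem~\ref{thm:main} into the master inequality
\[
 \rho(\sr f,\sr g) \;\ge\; \frac{\varepsilon^3}{384\, K\, e^{K\abs U}(e^{K\abs U}-1)}.
\]

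Case (i) drops straight out: for $K\abs U\le C_0/2$, monotonicity of $x\mapsto e^x(e^x-1)$ gives $e^{K\abs U}(e^{K\abs U}-1)\le e^{C_0/2}(e^{C_0/2}-1)$, and recognising $y_1 = 1/[384\,e^{C_0/2}(e^{C_0/2}-1)]$ finishes the case.

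For case (ii) the master inequality on $U$ is too weak because its denominator grows exponentially with $K\abs U$, so I would pass to a shorter subinterval. Set $n:=\ceil{K\abs U/C_0}$ and apply Lemma~\ref{lem:split} to $u=\A f-\A g$ to obtain $V\subset U$ with $\abs V=\abs U/n$ and $\norma{\A f-\A g}_{\ast,V}\ge \varepsilon/n$. Because $f\vert_V,g\vert_V\in \Sm_K(V)$ and $\rho$ decreases under restriction, the master inequality applied on $V$ gives
\[
 \rho(\sr f,\sr g) \;\ge\; \frac{(\varepsilon/n)^3}{384\,K\,e^{K\abs V}(e^{K\abs V}-1)} \;\ge\; \frac{\varepsilon^3/n^3}{384\,K\,e^{C_0}(e^{C_0}-1)},
\]
where the second inequality uses $K\abs V\le C_0$. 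The remaining and main obstacle is the uniform bound $n\le 2K\abs U/C_0$: for $K\abs U\ge C_0$ it follows from $n\le K\abs U/C_0+1$ together with $K\abs U/C_0\ge 1$, while for $C_0/2\le K\abs U < C_0$ one has $n=1$ and the inequality $1\le 2K\abs U/C_0$ is precisely the case hypothesis. Substituting $n^3\le 8K^3\abs U^3/C_0^3$ into the previous display produces the constant $3072=8\cdot 384$ and the claimed bound $\rho\ge y_0\,\varepsilon^3/(\abs U^3 K^4)$. This twin verification is exactly why the threshold between cases is placed at $C_0/2$ and not at the more natural-looking $C_0$.
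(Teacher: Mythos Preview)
Your argument is correct and follows essentially the same route as the paper's proof: both derive the intermediate bound $\rho(\sr f,\sr g)\ge \varepsilon^3/\bigl(384\,K\,e^{K\abs U}(e^{K\abs U}-1)\bigr)$ from Theorem~\ref{thm:main} via $e^x-1\ge x$ and $\norma{\A f}_\ast\le K\abs U$, then treat case~(ii) by setting $n=\ceil{K\abs U/C_0}$, invoking Lemma~\ref{lem:split}, and using $n\le 2K\abs U/C_0$. The paper records this last bound as the single observation that $K\abs U/C_0\ge \tfrac12$ forces $n\in[K\abs U/C_0,\,2K\abs U/C_0]$, which is exactly your two-subcase verification.
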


Before we begin the proof let me notice that in view of this corollary $\rho(\srE{15} \vert_{(0,1)},\srE{20} \vert_{(0,1)}) \ge 5.71442 \cdot 10^{-8}$ while in fact $\rho(\srE{15} \vert_{(0,1)},\srE{20} \vert_{(0,1)}) \approx 0.212$. Significant disproportion between this boundary and the real distance is caused mainly by the fact that there are only three parameters appearing in the right hand side of Corollary~\ref{cor:estim}, while the nature quasi-arithmetic means is much more complicated.

\begin{proof}
Let us denote, as usually, $\varepsilon:=\norma{\A{f}-\A{g}}_{\ast}$. We are going to prove each part of this corollary separately. 

\textbf{Part \textrm{(i)}.} By $\norma{\A{f}}_{\ast} \le K\abs{U} \le \tfrac{C_0}2$ and a common inequality $e^x-1 \ge x$ one obtains
\begin{align*}
\rho(\sr{f},\sr{g})&\ge \frac{\varepsilon(e^{\varepsilon/4}-1)(e^{\varepsilon/6}-1)}{16K \exp(\norma{\A{f}}_\ast)\left(e^{K\abs{U}}-1\right)} 
\ge \frac{\varepsilon^3}{384K \exp(K\abs{U})\left(e^{K\abs{U}}-1\right)} \\
&\ge \frac{1}{384K \exp(C_0/2)\left(e^{C_0/2}-1\right)} \varepsilon^3 
= y_1 \frac{\varepsilon^3}K.
\end{align*}

\textbf{Part \textrm{(ii)}.} 
Define 
\Eq{*}{
n_0:=\left\lceil \frac{K\abs{U}}{C_0} \right\rceil.
}
We know that $K\abs{U}/C_0 \ge \tfrac12$, and therefore
\Eq{Estn0}{
n_0 \in \left[ \frac{K\abs{U}}{C_0}, \frac{2K\abs{U}}{C_0} \right].
}
By Lemma~\ref{lem:split} consider an interval $V \subset U$ such that $\abs{V} = \tfrac 1{n_0} \abs{U}$ and 
\Eq{*}{
\varepsilon':=\norma{\A{f}-\A{g}}_{\ast, V} \ge \tfrac1{n_0} \norma{\A{f}-\A{g}}_{\ast, U}=\varepsilon/n_0.
}

By \eqref{Estn0},
\Eq{Un0}
{ K \abs{V} = \tfrac1{n_0} K \abs{U}\in [\tfrac{C_0}{2}, C_0].}
Moreover
\Eq{U5}{
\varepsilon' \ge \frac{ \varepsilon}{n_0}
\ge\frac{\varepsilon \cdot C_0}{2 K \abs{U}}. 
}
On the other hand, by $f \in \Sm_K(U)$, we can majorize 
$\norma{\A{f}}_{\ast, V} \le K \abs{V} \le C_0$.
Finally, by \eqref{Un0}, \eqref{U5}, and the classical inequality $e^x-1\ge x$ we obtain
\begin{align}
\rho(\sr{f},\sr{g})&\ge \frac{\varepsilon'(e^{\varepsilon'/4}-1)(e^{\varepsilon'/6}-1)}{16K \exp(\norma{\A{f}\vert_{V}}_\ast)\left(e^{K\abs{V}}-1\right)} \nolabel{U7} 
%%%%%%%%%%%%%
\ge \frac{(\varepsilon')^3}{384K \exp(K \abs{V})\left(e^{K\abs{V}}-1\right)} \nolabel{U7} \\
%%%%%%%%%%%%%
&\ge \frac{C_0^3}{384 \cdot 2^3 \cdot e^{C_0}(e^{C_0}-1)} \cdot \frac{\varepsilon^3}{\abs{U}^3K^4}  
=y_0 \cdot \frac{\varepsilon^3}{\abs{U}^3K^4} \nonumber
\end{align}

\end{proof}

\section{\label{sec:box} Box Distance}

In the previous section distance between means generated by $f,g \in \Sm_{K}(U)$ ($U$ - an interval) was expressed in terms of $\norma{\A{f}-\A{g}}_\ast$ -- the main theorem stated that $\rho(\sr{f},\sr{g})$ may be estimated from below by some term involving this value, length of the interval, and the number $K$. 

In this section we will define the distance between generators in the other way. More 
precisely we say that $f, g \in \Sm(U)$ ($U$ -- an interval) are \emph{$(\phi,K,\delta)$-separated} 
if there exist a closed interval $V \subset U$, $\abs{V} = \phi$ such that for all $x \in V$ the following inequalities are satisfied: $\abs{\A f(x)}\le K$, $\abs{\A g(x)}\le K$, and $\abs{\A f(x)-\A g(x)}\ge\delta$. 

Let me note that $(\phi,K,\delta)$-separation does not imply that functions belong to $\Sm_K(U)$, because the majorization of the Arrow-Pratt index is only on some subinterval (denoted above by $V$). Notice that both $f\vert_{V},\:g \vert_{V} \in \Sm_K(V)$ -- this makes using the letter $K$ absolutely natural in this context. 
As we will see, results from the previous section are useless here. We are going to prove the following statement:

\begin{thm}
\label{thm:box}
If $f,g \in \Sm(U)$ are $(\phi,K,\delta)$-separated then
\begin{align*}
\rho(\sr{f},\sr{g})&\ge \tfrac1K \ln(1+K \alpha),
\end{align*}
where
\begin{align}
\alpha&:= \frac{e^{-K\phi/2}-1}{K}-\frac{e^{(\delta-K)\phi/2}-1}{K-\delta}.\nolabel{2.162}
\end{align}
 \end{thm}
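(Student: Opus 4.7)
The plan is to reduce to a clean normalized setting and then exhibit a single two-point vector and weight realizing the claimed lower bound.

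First, several WLOG reductions. Since $V$ is an interval and $\A f - \A g$ is continuous with $\abs{\A f - \A g}\ge \delta>0$ on $V$, its sign is constant there, so I may assume $\A f - \A g \ge \delta$ on $V$. Replacing $f,g$ by $-f,-g$ if necessary leaves both $\A$ and the quasi-arithmetic means unchanged, so $f,g$ may be taken increasing. A translation places $V = [-\phi/2,\phi/2]$, and affine transformations (which preserve $\A$ and $\sr{\cdot}$) normalize $f(0)=g(0)=0$ and $f'(0)=g'(0)=1$.

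Next, pointwise differential estimates on $V$. Integrating $\A f - \A g \ge \delta$ starting from $0$ yields $f'(x) \ge g'(x)e^{\delta x}$ for $x\ge 0$ and $f'(x)\le g'(x)e^{\delta x}$ for $x\le 0$; in particular $f\ge g$ on $V$. The crucial observation is that $\A g = \A f - (\A f - \A g) \le K - \delta$ on $V$; combined with $\A g \ge -K$ and the normalization, this gives the refined pointwise bounds $g'(t) \ge e^{-Kt}$ for $t\ge 0$ and $g'(t) \ge e^{(K-\delta)t}$ for $t\le 0$, while $\abs{\A f}\le K$ yields $f'(t) \le e^{Kt}$ for $t\ge 0$.

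The key step is to test the two means against the vector $(-\phi/2,\phi/2)$ with the weight $\theta$ chosen so that $\sr{g}_\theta(-\phi/2,\phi/2)=0$, i.e., $\theta = g(\phi/2)/(g(\phi/2)-g(-\phi/2)) \in (0,1)$. Writing $A := f(\phi/2)-g(\phi/2)\ge 0$ and $B := f(-\phi/2)-g(-\phi/2)\ge 0$, a direct computation using $g(\sr{g}_\theta)=0$ gives the clean identity $f(\sr{f}_\theta) = \theta B + (1-\theta)A$. The pointwise estimates then yield $A \ge \int_0^{\phi/2} e^{-Kt}(e^{\delta t}-1)\,dt$ and $B \ge \int_{-\phi/2}^{0} e^{(K-\delta)t}(1-e^{\delta t})\,dt$, and a routine integration verifies that both quantities equal precisely the $\alpha$ of the theorem statement. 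The main obstacle, which requires the most care, is obtaining $B \ge \alpha$: using only $\abs{\A g}\le K$ to lower-bound $g'$ on $[-\phi/2,0]$ produces a value strictly smaller than $\alpha$, so invoking the sharper bound $\A g \le K-\delta$ is essential.

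Since $\theta B + (1-\theta)A$ is a convex combination of quantities $\ge \alpha$, we obtain $f(\sr{f}_\theta) \ge \alpha$. The bound $f'(t)\le e^{Kt}$ integrates to $f(x) \le (e^{Kx}-1)/K$ for $x\ge 0$, equivalently $f^{-1}(y)\ge \tfrac{1}{K}\ln(1+Ky)$ for $y\ge 0$; hence $\sr{f}_\theta - \sr{g}_\theta = \sr{f}_\theta \ge \tfrac{1}{K}\ln(1+K\alpha)$. Since $\rho(\sr f,\sr g)$ dominates $\abs{\sr{f}_\theta - \sr{g}_\theta}$ for this admissible two-point vector and weight, the theorem follows.
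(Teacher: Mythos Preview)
Your proof is correct and follows essentially the same route as the paper's: both test the endpoints of $V$ with the weight that places one of the two means at the midpoint, exploit the refined bound $\A{\,\cdot\,}\le K-\delta$ on the smaller Arrow--Pratt index to show the convex combination is $\ge\alpha$, and then invert via $|\A{\,\cdot\,}|\le K$. The only difference is a harmless swap of the roles of $f$ and $g$ (you normalize $\A f-\A g\ge\delta$ and fix $\sr{g}_\theta=0$, whereas the paper takes $\A g-\A f\ge\delta$ and fixes $\sr{f}_\theta=y$), and you handle the inequality $\A f-\A g\ge\delta$ directly where the paper reduces to the boundary case $\A g=\A f+\delta$.
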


 \begin{proof}

Let us take an interval $V$ from the definition of $(\phi,K,\delta)$-separation.
Denote $x:=\inf V$, $z:=\sup V$, and $y:=\tfrac{x+z}2$; in the other words $x,\,y,\,z \in V$ and $y-x=z-y=\phi/2$. In view of Remark~\ref{rem:eq_qa} we may assume without loss of generality that
\Eq{*}
{
f(y)=g(y)=0 \text{ and } f'(y)=g'(y)=1.
}
By the definition of separation and Proposition~\ref{prop:basiccompare}, we may also assume
\Eq{*}{
\A{g}(x)=\A{f}(x)+\delta, \quad x \in V.
}
Then, for every $v \in V$,
\begin{align}
g(v)&=\int_y^v e^{\int_y^t \A{g}(u)du}dt
=\int_y^v e^{\int_y^t \A{f}(u)du} \cdot e^{\delta \cdot (t-y)} dt \nolabel{2.5}\\  
&= \int_y^v e^{\delta(t-y)} f'(t) dt 
= f(v)+ \int_y^v \left(e^{\delta(t-y)}-1\right) f'(t) dt \label{2.7}.
\end{align}
Moreover, by $\A{f}(x) \in [-K,K-\delta]$ for $x \in U$, we immediately obtain two inequalities
\begin{align}
f'(t) \ge e^{(K-\delta) (t-y)}\quad \text{ for } t<y, \label{2.8}\\
f'(t) \ge e^{-K (t-y)}\quad \text{ for }t>y. \label{2.9}
\end{align}

This inequalities allow us to estimate [from below] values of $g(x)$ and $g(z)$. Indeed, \eqref{2.7} and \eqref{2.8} follow
\begin{align}
g(x)
&\ge f(x)+\int_x^y \left(1-e^{\delta(t-y)}\right) e^{(K-\delta)(t-y)} dt \nolabel{2.14}\\
&=f(x)+\frac{1-e^{(K-\delta)(x-y)}}{K-\delta}+\frac{e^{K(x-y)}-1}{K}
=f(x)+\alpha.\label{2.19}
\end{align}

Analogously, \eqref{2.7} and \eqref{2.9} imply
\Eq{2.18}{
g(z) &\ge f(z)+\alpha.
}

At the moment let us take $\theta \in (0,1)$ satisfying $\sr{f}_\theta(z,x)=y$. Then, by the definition of quasi-arithmetic mean, we get
\Eq{*}{
0=f(y)=\theta f(z)+(1-\theta) f(x). \nolabel{2.21}
}
Now we are going to bound the mean $\sr{g}$ for the same arguments. First we prove 
\Eq{2.27}{
g(\sr{g}_\theta(z,x)) \ge \alpha.
}
Indeed, by \eqref{2.19} and \eqref{2.18},
\begin{align*}
g(\sr{g}_\theta(z,x))&=\theta g(z)+(1-\theta) g(x)\\
&\ge \theta f(z)+\theta\alpha+(1-\theta)f(x)+(1-\theta)\alpha\\
&=f(y)+\alpha =\alpha.
\end{align*}
On the other hand $g(y)=0$ and $\abs{\A{g}(x)}\le K$ for $x \in V$. Thus we simply have
\Eq{*}{
g(u)\le \int_y^u e^{\int_y^x K dt}dx=\tfrac1K(e^{K(u-y)}-1),\quad u \in[y,z]. \nonumber
}
Combining this inequality with \eqref{2.27}, we obtain
\Eq{*}{
\alpha\le g(\sr{g}_\theta(z,x)) &\le \tfrac 1K \left( e^{K\cdot(\sr{g}_\theta(z,x)-y)}-1\right). }
Recall that $y=\sr{f}_\theta(z,x)$. Then one can rewrite the inequality above in the alternative form 
\Eq{*}{
 \sr{g}_\theta(z,x)- \sr{f}_\theta(z,x) &\ge  \tfrac1K \ln \left(1+ K\alpha \right).
 }
\end{proof}

Number $\alpha$ appearing in Theorem~\ref{thm:box} is rather a complicated one. Nevertheless, we can observe that $\alpha$ is a difference of one simple function evaluated in two different points. We can use this to simplify the right hand side. 
\begin{cor}
\label{cor:box}
 If $f,g \in \Sm(U)$ are $(\phi,K,\delta)$-separated then
\Eq{*}{
\rho(\sr{f},\sr{g})\ge\tfrac1K \ln \left(1+\tfrac\delta K \cdot \Theta (\tfrac{K\phi}2)\right),
}
where $\Theta(x):=1-e^{-x}-xe^{-x}$.
\end{cor}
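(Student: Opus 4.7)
The plan is to derive Corollary~\ref{cor:box} from Theorem~\ref{thm:box} by simplifying $\alpha$. Since $t\mapsto\tfrac1K\ln(1+Kt)$ is strictly increasing, it is enough to prove
\[
K\alpha \ge \tfrac{\delta}{K}\,\Theta\bigl(\tfrac{K\phi}{2}\bigr).
\]
Following the hint that ``$\alpha$ is a difference of one simple function evaluated at two different points,'' the first step is to introduce
\[
h(x) := \frac{e^{-x\phi/2}-1}{x},
\]
so that the two summands in the definition of $\alpha$ in Theorem~\ref{thm:box} have exactly this form with $x=K$ and $x=K-\delta$ respectively; thus $\alpha = h(K) - h(K-\delta)$. (The boundary case $\delta = K$ causes no trouble, since $h$ extends continuously to $0$ with value $-\phi/2$.)

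The second step is to differentiate $h$ and notice that $\Theta$ appears naturally:
\[
h'(x) = \frac{1 - e^{-x\phi/2}\bigl(1 + \tfrac{x\phi}{2}\bigr)}{x^2} = \frac{\Theta(x\phi/2)}{x^2}.
\]
By the fundamental theorem of calculus, $\alpha = \int_{K-\delta}^{K} h'(x)\,dx$. The desired bound $K\alpha \ge \tfrac{\delta}{K}\,\Theta(K\phi/2)$ is equivalent to $\alpha \ge \delta\cdot h'(K)$, which would follow at once if $h'(x) \ge h'(K)$ for every $x\in [K-\delta, K]$, i.e.\ if $h'$ is decreasing on $(0,\infty)$.

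This monotonicity is the main (and essentially only) obstacle. It amounts to showing that $t\mapsto \Theta(t)/t^2$ is decreasing on $(0,\infty)$; differentiating and multiplying by $t^3$ reduces the sign question to $q(t):= t\Theta'(t) - 2\Theta(t)$. With $\Theta'(t) = te^{-t}$ one computes
\[
q(t) = e^{-t}(t^2+2t+2) - 2,
\]
and this is non-positive on $[0,\infty)$ because $q(0) = 0$ and $q'(t) = -t^2 e^{-t}\le 0$. With this monotonicity established, integrating the bound $h'(x) \ge h'(K)$ over $[K-\delta, K]$ gives $\alpha \ge \delta\,\Theta(K\phi/2)/K^2$, which, substituted into Theorem~\ref{thm:box}, yields Corollary~\ref{cor:box}.
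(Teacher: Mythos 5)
Your proof is correct and is essentially the paper's own argument: the paper writes $\alpha=\tfrac\phi2\bigl(\omega(\tfrac{K\phi}2)-\omega(\tfrac{(K-\delta)\phi}2)\bigr)$ with $\omega(x)=\tfrac{e^{-x}-1}{x}$ and applies the mean value theorem together with the identity $x^2\omega'(x)=\Theta(x)$ and the concavity of $\omega$, which is exactly your argument up to the rescaling $h(x)=\tfrac\phi2\,\omega(\tfrac{x\phi}2)$ and the use of the fundamental theorem of calculus in place of the mean value theorem. Your explicit verification that $t\mapsto\Theta(t)/t^2$ is decreasing supplies a detail the paper only asserts (``$\omega$ is increasing and concave''), so nothing is missing.
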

\begin{proof}
Notice that the function 
\Eq{*}{
\omega(x):=\frac{e^{-x}-1}x
}
is increasing and concave, and therefore $\omega'$ is decreasing. By mean value theorem we get
\Eq{*}{
\omega(\tfrac{K\phi}2)-\omega(\tfrac{(K-\delta)\phi}2) \ge \tfrac{\delta\phi}2 \omega'(\tfrac{K\phi}2).
}
Applying this, Theorem~\ref{thm:box}, and the algebraic identity $x^2\omega'(x)=\Theta(x)$ we have 
\begin{align}
 \rho(\sr{f},\sr{g})
 &\ge\tfrac1K \ln \left(1+\tfrac \phi2 \cdot \left(\omega(\tfrac{K\phi}2)-\omega(\tfrac{(K-\delta)\phi}2)\right)\right) \nonumber \\
 &\ge\tfrac1K \ln \left(1+\tfrac \phi2 \cdot \tfrac{K\delta\phi}2\omega'(\tfrac{K\phi}2) \right) \nonumber \\
 &=\tfrac1K \ln \left(1+\tfrac \delta K \cdot \Theta(\tfrac{K\phi}2) \right) \nonumber 
\end{align}
\end{proof}

At the end of this section let us present an applications of all results on a simple example. Let me notice that the order of this numbers may vary depending on means. We take all additional parameters appearing in each result to obtain best possible boundaries.
\begin{align*}
\text{real value} &\qquad \rho(\srE{15} \vert_{(0,1)},\srE{20} \vert_{(0,1)}) \approx 0.212; \\
\text{Theorem~\ref{thm:main}} &\qquad \rho(\srE{15} \vert_{(0,1)},\srE{20} \vert_{(0,1)}) \ge 
3.19184 \cdot 10^{-17}; \\
\text{Corollary~\ref{cor:estim}} &\qquad \rho(\srE{15} \vert_{(0,1)},\srE{20} \vert_{(0,1)}) \ge 5.71442 \cdot 10^{-8}; \\
\text{Theorem~\ref{thm:box}} &\qquad \rho(\srE{15} \vert_{(0,1)},\srE{20} \vert_{(0,1)}) \ge 0.0143; \\
\text{Corollary~\ref{cor:box}} &\qquad \rho(\srE{15} \vert_{(0,1)},\srE{20} \vert_{(0,1)}) \ge 0.011.
\end{align*}

\section{\label{sec:prooflem:1} Proof of Proposition~\ref{prop:main}}
We will prove that \eqref{c5} holds for every $\varepsilon \in (0, \norma{\A{f}-\A{g}}_{\ast})$. It implies that  \eqref{c5} holds for $\varepsilon =\norma{\A{f}-\A{g}}_{\ast}$ too. 
Fix $\varepsilon \in (0, \norma{\A{f}-\A{g}}_{\ast})$, $c \in [0,1]$, and $\delta \in (0, \tfrac{\varepsilon}{4K})$. We will prove that 

\Eq{c5_1}{
\rho(\sr{f},\sr{g}) \ge 
\min\Big( 
(1-c)(\tfrac \varepsilon{2K}-2\delta),\:
\frac{\delta(e^{\varepsilon/4}-1)(\exp\left((\tfrac \varepsilon2-2K\delta)\cdot c\right)-1)}{2 \cdot \exp(\norma{\A{f}}_\ast) (e^{K\abs{U}}-1)}
\Big),}

which implies the inequality \eq{c5} to be proved.

In what follows this proof will be split into two cases. In each of them we will prove that $\rho(\sr{f},\sr{g})$ can be bounded from below be one of terms appearing on the right hand side of \eqref{c5_1}.
 As far as we are not able to predict which case is valid, we have to use a $\min$ function in the final result.
Before we begin the proper part let me introduce two technical, however important, constants:
\begin{align}
\theta&:=\frac{e^{\varepsilon/4}-1}{e^{K\abs{U}}-1},\label{11_5}\\
\alpha&:=\exp \left(2K\delta-\tfrac \varepsilon2 \right). \label{alpha_def}
\end{align}

At the moment we will briefly describe the idea of this proof.
We define:
\Eq{*}
{
F:=\sr{f}_\theta(x,z), \qquad
G:=\sr{g}_\theta(x,z ), \qquad
\tilde F:=\sr{f}_\theta(x+\delta,z ), \qquad
\tilde G:=\sr{g}_\theta(x+\delta,z ). 
}
Equalities $F=G$ or $\tilde F=\tilde G$ cannot be excluded, therefore to provide certain lower bound we need to use more sophisticated way. 
Namely, we will not just estimate the difference between two means, but $\max(\abs{F-G},\abs{\tilde F-\tilde G})$. Then we will use the trivial inequality
\Eq{16_5}{
\rho(\sr{f},\sr{g}) \ge \max(\abs{F-G},\abs{\tilde F-\tilde G}).
}

 By the definition of $\norma{\cdot}_{\ast}$, there exist $x,\,z \in U$, $x<z$ such that, 
\Eq{*}{
\abs{\int_x^z \A{f}(t)-\A{g}(t)dt}=\varepsilon.
}
By Remark~\ref{rem:eq_qa}, let $f(x)=g(x)=0$ and $f'(x)=g'(x)=1$. Assume without loss of generality, switching $f$ and $g$ if necessary, that
\Eq{Emain:1}{
\int_x^z \A{f}(t)-\A{g}(t)dt=\varepsilon.
}
Then either  
\Eq{*}{
\int_x^{(x+z)/2} \A{f}(t)-\A{g}(t)dt\le \tfrac\varepsilon2 \text{ or } \int_{(x+z)/2}^z \A{f}(t)-\A{g}(t)dt\le \tfrac\varepsilon2.
}
This cases are analogous (the mapping $f(t) \mapsto -f(\frac{x+y}2-t)$ is a natural transitions between them; cf. e.g. \cite{Pas13,Pas15d,Pas15a}). Therefore, from now on we may assume that the first inequality holds. By  \eqref{Emain:1} there exists $y \in [\tfrac{x+z}2,z)$ such that
\begin{align}
\int_x^y \A{f}(t)-\A{g}(t) dt&=\tfrac\varepsilon2,\quad \text{ and } \label{7}\\
\int_x^u \A{f}(t)-\A{g}(t) dt&\ge \tfrac\varepsilon2\quad \text{ for all }u \in (y,z). \label{8}
\end{align}

Equality \eqref{7} can be expressed equivalently as 
\Eq{7_1}{
\int_y^z \A{f}(t)-\A{g}(t) dt=\tfrac{\varepsilon}2.
}
Moreover, by \eqref{8} and the identity 
\Eq{ID_AP}{
f'(u)=\exp(\int_x^u \A f(t) - \A g(t) dt) g'(u), \quad u \in U,
}
we have
\Eq{10}{
f'(u)&\ge e^{\varepsilon/2} g'(u),\quad u\in [y,z].
}

On the other hand, by $f,g \in \Sm_K(U)$, we get $\abs{\A{f}(t)-\A{g}(t)} \le 2K$ for every $t \in U$. Therefore \eqref{7_1} implies $z-y \ge\tfrac{\varepsilon}{4K}$. The same estimation applied to \eqref{ID_AP} follows
\begin{align}
f'(u)&\le e^{2K \cdot \abs{x-u}} g'(u), \quad u \in U. \label{11}
\end{align}

Definition of $\Sm_{K}$ and $\theta$ expressed in \eqref{property:Sm_K_def}, \eqref{11_5}, respectively, follow
\begin{align}
\min(F,G) \ge \srE{-K}\Big((x,z),(\theta,1-\theta)\Big)= z-\tfrac{\varepsilon}{4K} \ge y.
\end{align}
Whence $F \ge y \ge \tfrac{x+z}2$ and $G \ge y \ge \tfrac{x+z}2$.
Furthermore we have a simple equalities
\begin{align}
f(\tilde F)-f(F)&=\theta (f(x+\delta)-f(x)), \label{16_75} \\
g(\tilde G)-g(G)&=\theta (g(x+\delta)-g(x)). \nonumber
\end{align}
This equalities combined with  \eqref{11} and \eqref{10} implies
\begin{align*}
 f(\tilde F)-f(F)&=\theta(f(x+\delta)-f(x))
\le \theta e^{2K \delta} (g(x+\delta)-g(x)) 
= e^{2K \delta} \cdot \left(  g(\tilde G)-g(G) \right)\\
&\le \frac{e^{2K\delta}}{e^{\varepsilon/2}} \left(  f(\tilde G)-f(G) \right)
= \alpha \left(  f(\tilde G)-f(G) \right).
\end{align*}
In fact the inequality above is crucial. We know that $\alpha<1$ and $f(\tilde F)-f(F)$ is positive. Therefore this inequality alone implies that equalities $F=G$ and $\tilde F=\tilde G$ cannot be simultaneously satisfied. This simple idea allows us to estimate lower boundary of the difference between this value. To do this, let us express the inequality above in the integral form
\Eq{*}{
(\tilde F-F) \cdot \int_0^1 f'(F+(\tilde F-F) \theta) d\theta &< \alpha \cdot (\tilde G-G) \cdot  \int_0^1 f'(G+(\tilde G-G) \theta) d\theta.
}
By the definition $\tilde F > F$, $\tilde G >G$, and $f'(x)>0$ for every $x \in U$. Thus either
\begin{enumerate}[(i)]
 \item $\int_0^1 f'(F+(\tilde F-F) \theta) d\theta  \le \alpha^{1-c} \cdot \int_0^1 f'(G+(\tilde G-G) \theta) d\theta $, or
 \item $\tilde F-F \le \alpha^c \cdot (\tilde G-G)$.
 \end{enumerate}
This naturally splits our proof onto two cases depending on which of inequalities hold. It could happen that both of them hold, but it does not affect to the proof.

\subsection{Case (i)}
By mean value theorem, there exists $\theta_0 \in (0,1)$ such that
\begin{align}
f'(F+(\tilde F-F) \theta_0) &\le \alpha^{1-c} \cdot f'(G+(\tilde G-G) \theta_0), \nonumber \\
\alpha^{c-1} &\le \frac{f'(G+(\tilde G-G) \theta_0)}{f'(F+(\tilde F-F) \theta_0)}. \nonumber
\end{align}
On the other hand by $f \in \Sm_{K}(U)$ we get
\begin{align*}
\abs{\frac{d}{d t}(\ln f'(t))}&\le K,\\
\abs{\ln f'(G+(\tilde G-G) \theta_0)-\ln f'(F+(\tilde F-F) \theta_0)}&\le K\abs{(G+(\tilde G-G) \theta_0)-(F+(\tilde F-F) \theta_0)} \\
\frac{f'(G+(\tilde G-G) \theta_0)}{f'(F+(\tilde F-F) \theta_0)}&\le \exp \left(K \abs{(1-\theta_0)(G-F)+\theta_0(\tilde G-\tilde F)}\right).
\end{align*}
But $\theta_0 \in(0,1)$, so we simply obtain 
\Eq{*}{
\alpha^{c-1} \le \frac{f'(G+(\tilde G-G) \theta_0)}{f'(F+(\tilde F-F) \theta_0)}&\le \exp \left(K \cdot \rho(\sr{f},\sr{g})\right).
}
Finally in this case we have the inequality
\Eq{*}{
\rho(\sr{f},\sr{g}) \ge \frac{(c-1)\ln(\alpha)}{K} 
= \tfrac{(c-1)(2K\delta-\tfrac \varepsilon2)}{K} 
=(1-c)(\tfrac \varepsilon{2K}-2\delta).
}

\subsection{Case (ii)}
%We have
%\begin{align*}
%\tilde G-G &\ge \alpha^{-c}  (\tilde F-F),\\
%\tilde G-\tilde F-G+F &\ge (\alpha^{-c}-1)  (\tilde F-F).\\
%\end{align*}
Using the elementary inequality $\max(\abs{p},\abs{q}) \ge \tfrac{p-q}2$ we get
\Eq{a3}{
\max\left(\abs{\tilde G-\tilde F},\abs{G-F}\right)&\ge \tfrac12 (\tilde G-\tilde F-G+F) \ge \tfrac12 (\alpha^{-c}-1) (\tilde F-F).}

Notice that this step is in fact main reason of the huge disproportion between our estimation and the optimal one. Assume for example that $G-F \approx \rho(\sr{f},\sr{g})$. As $\tilde F$ and $\tilde G$ are close to $F$ and $G$ respectively, we obtain that $\tilde G-\tilde F \approx \rho(\sr{f},\sr{g})$ too. However in this case we use a boundary
$\rho(\sr{f},\sr{g}) \ge \tfrac12 ((\tilde G-\tilde F)-(G-F))$,
which is far from $\rho(\sr{f},\sr{g})$.

By mean value theorem, there exists $\mu \in(F,\tilde F)$ and $\nu \in(x,x+\delta)$ such that
\Eq{*}{
f(\tilde F)-f(F)&=f'(\mu)(\tilde F-F),\\
f(x+\delta)-f(x)&=f'(\nu) \cdot \delta.
}
At the moment \eqref{16_75} can be rewritten as
\Eq{*}{
\tilde F-F=\theta \delta \frac{f'(\nu)}{f'(\mu)}, \nolabel{a6}
}

At the moment we are going to use the inequality from \cite{Pas15d}:
\Eq{*}{
\frac{f'(\nu)}{f'(\mu)}
=\exp(\int_\mu^\nu \A{f}(x)dx) 
\ge \exp(-\norma{\A{f}}_{\ast}).
}
Thus we immediately obtain
\Eq{a7}{
\tilde F-F \ge \frac{\theta \delta}{\exp(\norma{\A{f}}_\ast)}.
}

Finally, combining \eqref{a3}, \eqref{a7}, \eqref{alpha_def}, and \eqref{11_5} we obtain
\begin{align}
\rho(\sr{f},\sr{g}) &\ge \max\left(\abs{\tilde G-\tilde F},\abs{G-F}\right) \nonumber \\
&\ge 
\tfrac12 (\alpha^{-c}-1) (\tilde F-F) \nolabel{}\\
&\ge\frac{\theta \delta}{2 \cdot \exp(\norma{\A{f}}_\ast)} (\alpha^{-c}-1) \nolabel{a8}\\
%&\ge \frac{\theta \delta}{2 \cdot \exp(\norma{\A{f}}_\ast)} (\exp\left((\tfrac \varepsilon2-2K\delta)\cdot c\right)-1) \nolabel{a9}\\
&= \frac{\delta}{2 \cdot \exp(\norma{\A{f}}_\ast)} \cdot \frac{e^{\varepsilon/4}-1}{e^{K\abs{U}}-1} \cdot (\exp\left((\tfrac \varepsilon2-2K\delta)\cdot c\right)-1), \label{a10}
\end{align}
which is the second term appearing in the right hand side of \eqref{c5_1}.

%\def\cprime{$'$} \def\R{\mathbb R} \def\Z{\mathbb Z} \def\Q{\mathbb Q}
%  \def\C{\mathbb C}

%\bibliography{publ,funcequ}

\begin{thebibliography}{10}

\bibitem{Bul03}
P.~S. Bullen.
\newblock {\em {Handbook of means and their inequalities}}, volume 560 of {\em
  {Mathematics and its Applications}}.
\newblock Kluwer Academic Publishers Group, Dordrecht, 2003.

\bibitem{CarShi69}
G.~T. Cargo and O.~Shisha.
\newblock A metric space connected with generalized means.
\newblock {\em J. Approx. Theory}, 2(2):207–222, 1969.

\bibitem{Def31}
B.~de~Finetti.
\newblock {{S}ul concetto di media}.
\newblock {\em Giornale dell' Instituto, Italiano degli Attuarii}, 2:369–396,
  1931.

\bibitem{Kno28}
K.~Knopp.
\newblock {Über {R}eihen mit positiven {G}liedern}.
\newblock {\em J. London Math. Soc.}, 3:205–211, 1928.

\bibitem{Kol30}
A.~N. Kolmogorov.
\newblock {{S}ur la notion de la moyenne}.
\newblock {\em Rend. Accad. dei Lincei (6)}, 12:388–391, 1930.

\bibitem{Mik48}
J.~G. Mikusiński.
\newblock Sur les moyennes de la forme $\psi^{-1}[\sum q\psi(x)]$.
\newblock {\em Studia Mathematica}, 10(1):90--96, 1948.

\bibitem{Nag30}
M.~Nagumo.
\newblock {Über eine {K}lasse der {M}ittelwerte}.
\newblock {\em Jap. Jour. of Math.}, 7:71–79, 1930.

\bibitem{Pas13}
P.~Pasteczka.
\newblock {When is a family of generalized means a scale?}
\newblock {\em Real Anal. Exchange}, 38(1):193–209, 2012/13.

\bibitem{Pas15d}
P.~Pasteczka.
\newblock {A new estimate of the difference among quasi-arithmetic means}.
\newblock {\em Math. Inequal. Appl.}, 18(4):1321–1327, 2015.

\bibitem{Pas15c}
P.~Pasteczka.
\newblock {On negative results concerning {H}ardy means}.
\newblock {\em Acta Math. Hungar.}, 146(1):98–106, 2015.

\bibitem{Pas15a}
P.~Pasteczka.
\newblock {Scales of quasi-arithmetic means determined by an invariance
  property}.
\newblock {\em J. Difference Equ. Appl.}, 21(8):742–755, 2015.

\bibitem{Pas16a}
P.~Pasteczka.
\newblock {Iterated quasi-arithmetic mean type mappings}.
\newblock {\em Colloq. Math.}, 144(2):215–228, 2016.

\bibitem{Pal91}
Zs. Páles.
\newblock {On the convergence of means}.
\newblock {\em J. Math. Anal. Appl.}, 156(1):52–60, 1991.

\end{thebibliography}
%\bibliographystyle{plain}

\end{document}